\documentclass[11pt]{amsart}
\usepackage[colorlinks=true,pagebackref,hyperindex,citecolor=blue,linkcolor=red]{hyperref}
\usepackage{appendix}
\usepackage{amsmath}
\usepackage{amsfonts}
\usepackage{amssymb}
\usepackage{color}
\usepackage[all]{xy}  
\usepackage{enumerate}
\usepackage[top=1in, bottom=1in, left=1in, right=1in]{geometry}
\usepackage{mathrsfs}

\usepackage{stmaryrd}
\usepackage{rotating}

\usepackage[english]{babel}
%\usepackage{blindtext}

%\usepackage{mathabx}

% THEOREM Environments ---------------------------------------------------
\theoremstyle{definition}
\newtheorem{theorem}{Theorem}[section]

\newtheorem{question}[theorem]{Question}
\newtheorem{corollary}[theorem]{Corollary}

\newtheorem{proposition}[theorem]{Proposition}

\newtheorem*{ex}{Example}

\theoremstyle{definition}
\newtheorem{definition}[theorem]{Definition}

\newtheorem{example}[theorem]{Example}
\newtheorem*{disc}{Discussion}

\newtheorem{remark}[theorem]{Remark}
\numberwithin{equation}{subsection}

% MATH -------------------------------------------------------------------

%For p^e-th roots

\newcommand{\m}{\mathfrak{m}}

%For sets

\newcommand{\NN}{\mathbb{N}}
\newcommand{\ZZ}{\mathbb{Z}}
\newcommand{\QQ}{\mathbb{Q}}

\newcommand{\Depth}{\operatorname{depth}}

\newcommand{\Ext}{\operatorname{Ext}}
\newcommand{\Tor}{\operatorname{Tor}}

\newcommand{\Char}{\operatorname{char}}

\newcommand{\Ht}{\operatorname{ht}}

%Connecting map in LES

%For notes

 %For red comments on typos

%Alessandro's commands
\newcommand{\ls}{\leqslant}
\newcommand{\gs}{\geqslant}
\newcommand{\ds}{\displaystyle}

\newcommand{\ov}[1]{\overline{#1}}

\newcommand{\grade}{\operatorname{grade}}
%\pagestyle{plain}
%-------------------------------------------------------

\begin{document}
\newcommand{\tens}{\otimes}
\newcommand{\hhtest}[1]{\tau ( #1 )}
\renewcommand{\hom}[3]{\operatorname{Hom}_{#1} ( #2, #3 )}
\newcommand{\ind}{\operatorname{index}}
\newcommand{\gll}{\operatorname{g\ell\ell}}
\newcommand{\ord}{\operatorname{ord}}
\renewcommand{\ll}{\operatorname{\ell\ell}}
\newcommand{\soc}{\operatorname{soc}}
\newcommand{\CCC}{\mathfrak{C}}
\newcommand{\frk}{\operatorname{f-rank}}
\newcommand{\lcm}{\operatorname{lcm}}
\newcommand{\PPi}{\ds \left(\Pi\right)}
\newcommand{\MinGen}{\operatorname{MinGen}}

\title{Products of ideals may not be Golod}
\author{Alessandro De Stefani}
\subjclass[2010]{Primary 13A02; Secondary 13D40}

\keywords{Golod rings; product of ideals; Koszul homology; Koszul cycles; strongly Golod}
\begin{abstract} We exhibit an example of a product of two proper monomial ideals such that the residue class ring is not Golod. We also discuss the strongly Golod property for rational powers of monomial ideals, and introduce some sufficient conditions for weak Golodness of monomial ideals. Along the way, we ask some related questions.
\end{abstract}
\maketitle
\section{Introduction}
Let $k$ be a field, and let $(R,\m,k)$ denote a Noetherian positively graded $k$-algebra, with $R_0=k$ and irrelevant maximal ideal $\m=\bigoplus_{i \gs 1} R_i$. Consider the Poincar{\'e} series of $R$
\[
\ds P_R(t) = \sum_{i \gs 0} \dim_k \Tor_i^R(k,k)t^i
\]
which is, in general, not rational \cite{Anick}. If $n = \dim_k \m/\m^2$ is the embedding dimension of $R$, Serre showed that $P_R(t)$ is bounded above term by term by the following rational series
\[
\ds \frac{(1+t)^n}{\ds 1-t\sum_{i \gs 1} \dim_k \left(H_i(R)\right)t^i}.
\]
Here $H_i(R)$ is the $i$-th homology of the Koszul complex on a minimal homogeneous generating set of $\m$, over the ring $R$. The ring $R$ is called {\it Golod} if equality holds. As a consequence, Golod rings have rational Poincar{\'e} series. The main purpose of this article is to answer, in negative, the following question
\begin{question}\cite[Problem 6.18]{McCPeeva} \label{mainq} Let $k$ be a field, and let $(R,\m,k)$ be positively graded $k$-algebra. Let $I,J$ be two proper homogeneous ideals in $R$. Is the ring $R/IJ$ always Golod?
\end{question}
As reported in \cite{McCPeeva}, Question \ref{mainq} was first asked by Volkmar Welker. The general belief, supported by strong computational evidence, was that this question had positive answer. The first result in this direction is a theorem of Herzog and Steurich \cite{HerSt}: let $S$ be a polynomial ring over a field, and let $I$,$J$ be two proper homogeneous ideals of $S$. If $I\cap J = IJ$, then $S/IJ$ is Golod. Another reason to believe that Question \ref{mainq} had positive answer comes from a result of Avramov and Golod \cite{AvGld}, which says that Golod rings are never Gorenstein, unless they are hypersurfaces. This is consistent with a result of Huneke \cite{HunProd}, according to which $S/IJ$ is never Gorenstein, unless $I$ and $J$ are principal. More recently, Herzog and Huneke show that, if $I$ is a homogeneous ideal in a polynomial ring $S$ over a field of characteristic zero, then, for all $d \gs 2$, the ring $S/I^d$ is Golod \cite[Theorem 2.3 (d)]{HerHun}. In \cite[Theorem 1.1]{FkhWel} Seyed Fakhari and Welker write that any product of proper monomial ideals in a polynomial ring over a field is Golod. The key step in their proof is to show that products of monomial ideals always satisfy the strong-GCD condition. This condition is the existence of a linear order on a minimal monomial generating set of the ideal, satisfing certaintain properties \cite[Definition 3.8]{Joll}. The fact that monomial ideals that satisfy the strong-GCD condition are Golod is first stated by J{\"o}llenbeck in \cite[Theorem 7.5]{Joll}, provided an extra assumption, called Property (P), is satisfied, and then by Berglund and J{\"o}llenbeck in \cite[Theorem 5.5]{BerJoll}, where the extra assumption is removed. 

In Section \ref{Sec_counter}, we provide examples of products of proper monomial ideals in a polynomial ring $S$ over a field, such that the residue class ring is not Golod. For instance, Example \ref{Sri}:
\begin{ex} Let $k$ be a field, and let $S=k[x,y,z,w]$ be a polynomial ring, with standard grading. Consider the monomial ideals $\m = (x,y,z,w)$ and $J = (x^2,y^2,z^2,w^2)$ inside $S$. Let 
\[
\ds I:=\m J = (x^3,x^2y,x^2z,x^2w,xy^2,y^3,y^2z,y^2w,xz^2,yz^2,z^3,z^2w,xw^2,yw^2,zw^2,w^3)
\]
be their product, and set $R=S/I$. Then, the ring $R$ is not Golod.

\end{ex} 
Our example satisfies the strong-GCD condition. Indeed, the argument of \cite[Theorem 1.1]{FkhWel} is correct, but it only shows that products of monomial ideals satisfy the strong-GCD condition. We have not been able to locate specifically where the mistake in \cite{Joll} or \cite{BerJoll} may be.

In Section \ref{frac} we study the strongly Golod property for rational powers of monomial ideals. Let $S = k[x_1,\ldots,x_n]$ be a polynomial ring over a field $k$ of characteristic zero, and let $I \subseteq S$. In \cite{HerHun} Herzog and Huneke introduce the following notion: $I$ is called {\it strongly Golod} if $\partial(I)^2 \subseteq I$, where $\partial(I)$ is the ideal of $S$ generated by the partial derivatives of elements in $I$. The main point of this definition is that, if an ideal $I$ is strongly Golod, then the ring $S/I$ is Golod  \cite[Theorem 1.1]{HerHun}. Among other things, in Section \ref{frac} we show that if $I$ is a strongly Golod monomial ideal, then so is $I^{p/q}$, for any $p \gs q$. This generalizes \cite[Proposition 3.1]{HerHun}. 

It is easy to find examples of ideals that are Golod, but not strongly Golod. In \cite{HerHun}, Herzog and Huneke introduce the notion of {\it squarefree strongly Golod} ideal, that applies to squarefree monomial ideals. This is a weakening of the strongly Golod definition, but it still implies that the multiplication on the Koszul homology is identically zero. We will say that a ring is {\it weakly Golod} if the multiplication on Koszul homology is trivial. In \cite[Theorem 5.1]{BerJoll}, Berglund and J{\"o}llenbeck show that, in case the ideal in question is monomial, weak Golodness and Golodness are equivalent notions. Herzog and Huneke use this result in \cite[Theorem 3.5]{HerHun} to conclude that squarefree strongly Golod ideals are Golod. See Section \ref{quests} for more discussions and questions about this topic. In Section \ref{lcm} we introduce {\it $\lcm$-strongly Golod} monomial ideals, which are a more general version of squarefree strongly Golod ideals. We show that $\lcm$-strongly Golod ideals are weakly Golod. In Section \ref{quests}, we give some sufficient conditions for an ideal to be strongly Golod, and we ask several related questions. In Appendix \ref{res} we record a minimal free resolution for Example \ref{main}, for convenience of the reader. All computations are made using the computer software system Macaulay2 \cite{Mac2}.

\section{Examples of products that are not Golod} \label{Sec_counter}
Golod rings were named after Evgenii S. Golod, who proved that the upper bound in Serre's inequality is achieved if and only if the Eagon resolution is minimial \cite{Golod}. This happens if and only if all the Massey operations of the ring vanish. Since the vanishing of the second Massey operation means that every product of Koszul cycles of positive homological degree is a boundary, Golod rings have, in particular, trivial multiplication on the positive degree Koszul homology. We will use this fact in the proofs of our examples in this section. See \cite[Chapter 4]{GullLev} or \cite[Section 5.2]{AvrInf} for details and more general statements. 

If $(R,\m,k)$ is a Noetherian positively graded algebra over a field $k$, we can write $R \cong S/I$, where $S=k[x_1,\ldots,x_n]$ is a polynomial ring, and $I \subseteq S$ is a homogeneous ideal. If $\m = (x_1,\ldots,x_n)$ denotes the irrelevant maximal ideal of $S$, we can always assume that $I \subseteq \m^2$. Let $K_\bullet$ be the Koszul complex on the elements $x_1,\ldots,x_n$ of $S$, which is a minimal free resolution of $k$ over $S$. We have that $K_1$ is a free $S$-module of rank $n$, and we denote by $\{e_{x_1},\ldots,e_{x_n}\}$ a basis. In addition, we have that $K_i \cong \bigwedge_i K_1$ for all $i=1,\ldots, n$, and the differential $\delta_i:K_i \to K_{i-1}$ on a basis element is given by
\[
\ds \delta_i(e_{t_1} \wedge \ldots \wedge e_{t_i}) = \sum_{j=1}^i (-1)^{j-1} \ t_j \ e_{t_1} \wedge \ldots \wedge e_{t_{j-1}} \wedge e_{t_{j+1}} \wedge \ldots \wedge e_{t_i},
\]
and extended by linearity to $K_i$. Let $K_\bullet(R) = K_\bullet \otimes_S R$ be the Koszul complex on $R$. We denote by $Z_\bullet(R)$ the Koszul cycles, and by $H_\bullet(R)$ the Koszul homology on $R$. 

We are now ready for the first example.

\begin{example} \label{Sri} Let $k$ be a field, and let $S=k[x,y,z,w]$, with the standard grading. Let $\m=(x,y,z,w)$ be the irrelevant maximal ideal, consider the monomial ideal $J = (x^2,y^2,z^2,w^2)$ and let
\[
\ds I:=\m J = (x^3,x^2y,x^2z,x^2w,xy^2,y^3,y^2z,y^2w,xz^2,yz^2,z^3,z^2w,xw^2,yw^2,zw^2,w^3).
\]
Then, the ring $R = S/I$ is not Golod.
\end{example}
\begin{proof}
Golod rings have trivial multiplication on $H_\bullet(R)_{\gs 1}$. Therefore, to show that $R$ is not Golod, it is enough to show that there exist two elements $\alpha,\beta \in H_\bullet(R)_{\gs 1}$ such that $\alpha \beta \ne 0$. Consider the element $u=(e_x\wedge e_y) \otimes xy \in K_2(R)$. It is a Koszul cycle:
\[
\ds \delta_2(u) = e_y \otimes x^2y -e_x \otimes xy^2 =0 \ \mbox{ in } K_1(R),
\]
because $x^2y \in I$ and $xy^2 \in I$. Then, let $\alpha := [u] \in H_2(R)$ be its residue class in homology. Similarly, let $v=(e_z \wedge e_w) \otimes zw \in Z_2(R)$, and let $\beta:= [v] \in H_2(R)$. We want to show that $uv = (e_x \wedge e_y \wedge e_z \wedge e_w) \otimes xyzw \in Z_4(R)$ is not a boundary, so that $\alpha\beta = [uv] \ne 0$ in $H_4(R)$. Note that $K_5(R) = 0$, hence such a product is zero in homology if and only if $xyzw \in I$. But $xyzw \notin I$, as every monomial generator of $I$ contains the square of a variable. 
\end{proof}
\begin{remark} We keep the same notation as in Example \ref{Sri}. Using Macaulay2 \cite{Mac2}, one can compute the first Betti numbers of $k$ over $R$:
\[
\xymatrixcolsep{5mm}
\xymatrixrowsep{2mm}
\xymatrix{
\ldots \ar[r] & R^{11283} \ar[r] & R^{2312} \ar[r] & R^{493} \ar[r] & R^{98} \ar[r] & R^{22} \ar[r] & R^4 \ar[r] & R \ar[r] & k \ar[r] & 0.
}
\]
Therefore the Poincar{\'e} series of $R$ is 
\[
\ds P_R(t) = 1+4t+422t^2+98t^3 + 493t^4 + 2312t^5 + 11283t^6 + \ldots
\]
On the other hand, the upper bound given by Serre's inequality is
\[
\ds \frac{(1+t)^4}{1-16t^2-30t^3-20t^4-5t^5} = 1+4t+22t^2+98t^3 + 493t^4 + 2313t^5 + 11288t^6 + \ldots
\]
Since the two series are not coefficientwise equal, $R$ is not Golod. We also checked that $R$ is not Golod using the Macaulay2 command \verb|isGolod(S/I)|
which computes the generators of all the Koszul homology modules, and determines whether their products are zero.
\end{remark}
\begin{example} \label{Aldo} If one is looking for an example where the ideals are generated in higher degrees, for $j \gs 1$ one can consider, along the lines of Example \ref{Sri}, the following family of products, suggested to us by Aldo Conca:
\[
\ds (x^{j+1},y^{j+1},z^{j+1},w^{j+1}) (x^j,y^j,z^j,w^j) \subseteq k[x,y,z,w].
\]
As in Example \ref{Sri}, one can show that the product of cycles
\[
\ds ((e_x \wedge e_y) \otimes x^jy^j) \cdot ((e_z \wedge e_w) \otimes z^jw^j)
\]
is not zero as a cycle and, hence, in homology. 
\end{example}

\begin{remark} \label{remSri} We want to point out that Example \ref{Sri} is not the first example of a non-Golod product of ideals that we discovered. In fact, Example \ref{Sri} was suggested to the author by Srikanth Iyengar, after some discussions about Example \ref{main}. Given the proof of Example \ref{Sri}, it becomes easy to show that the ring of Example \ref{main} is not Golod. In fact, going modulo a regular sequence of linear forms in the ring of Example \ref{main}, one obtains a ring isomorphic to the one of Example \ref{Sri}. Then, one can use \cite[Proposition 5.2.4 (2)]{AvrInf}, adapted to the graded case. The original proof that Example \ref{main} is not Golod is much more involved. Nonetheless, since this was the first example discovered by this author, we want to briefly describe the argument in the rest of this section.
\end{remark}

The original proof relies on lifting Koszul cycles. More specifically, we use the double-complex proof of the fact that $\Tor_\bullet^S(k,S/I)$ can be computed in two ways, to lift a Koszul cycle to a specific element of a finitely generated $k$-vector space. The results that we use are very well known, so we will not explain all the steps. We refer the reader to \cite{Weib} or \cite{Rot} for more details. 

Let $S=k[x_1,\ldots,x_n]$ be a polynomial ring over a field $k$, not necessarily standard graded, and let $\m$ be the irrelevant maximal ideal. Let $I \subseteq \m^2$ be a homogeneous ideal in $S$, and consider the residue class ring $R=S/I$. Since $K_\bullet$ is a free resolution of $k$ over $S$, we have that $\ds H_i(R) := H_i(K_\bullet \otimes_S R) \cong \Tor_i^S(k,R)$, and its dimension as a $k$-vector space is the $i$-th Betti number, $\beta_i$, of $R$ as an $S$-module. On the other hand, if $F_\bullet \to R \to 0$ is a minimal free resolution of $R$ over $S$, then $\ds H_i(k \otimes_S F_\bullet) \cong k \otimes F_i$ is also isomorphic to $\Tor_i^S(k,R)$. There is map $\psi: Z_i(R) \to k\otimes F_i$, which is constructed by "lifting cycles". Since the boundaries map to zero via $\psi$, this induces a map $\overline{\psi}: H_i(R) \to k\otimes_S F_i$, which is an isomorphism. See \cite{HerCycles} for a canonical way to construct Koszul cycles from elements in $k \otimes F_i$ (that is, a canonical choice of an inverse for $\psi$).

We are now ready to illustrate the example. We refer the reader to Appendix \ref{res} for an explicit expression of the differentials in a resolution of $R = S/I$ as a module over $S$.
\begin{example} \label{main} Let $k$ be a field, and let $S=k[a,b,c,d,x,y,z,w]$. Consider the monomial ideals $I_1 = (ax,by,cz,dw)$ and $I_2 = (a,b,c,d)$ inside $S$. Let 
\[
\ds I:=I_1 I_2 = (a^2x,abx,acx,adx,aby,b^2y,bcy,bdy,acz,bcz,c^2z,cdz,adw,bdw,cdw,d^2w)
\]
be their product, and set $R=S/I$. Then, the ring $R$ is not Golod.
\end{example}
\begin{proof}
Let $0 \to F_4 \to F_3 \to F_2 \to F_1 \to F_0 \to R \to 0$ be a minimal free resolution of $R$ over $S$, with maps $\varphi_j:F_j \to F_{j-1}, j=1,\ldots,4$, and $\varphi_0:F_0=S \to R$ being the natural projection. For each $i=0,\ldots,4$ and each free module $F_i = S^{\beta_i}$ fix standard bases $E^{(i)}_j$, $j=1,\ldots,\beta_i$. In this way, the differentials can be represented by matrices (see Appendix \ref{res} for an explicit description). We have the following staircase:
\[
\xymatrixcolsep{5mm}
\xymatrixrowsep{2mm}
\xymatrix{
&&&&&&&& S \otimes_S S^5 \ar[ddd]^-{1_{S} \otimes \varphi_4} \ar[rr]^-{\delta_0 \otimes 1_{S^5}} && k \otimes_S S^5 \\ \\ \\
&&&&&& K_1 \otimes_S S^{20} \ar[ddd]^-{1_{K_1} \otimes \varphi_3} \ar[rr]^-{\delta_1 \otimes 1_{S^{20}}} && S \otimes_S S^{20} \\ \\ \\
&&&&  K_{2} \otimes_S S^{30} \ar[ddd]^-{1_{K_{2}}\otimes \varphi_2} \ar[rr]^-{\delta_{2} \otimes 1_{S^{30}}} &&  K_1 \otimes_S S^{30}\\ \\ \\
&& K_{3} \otimes_S S^{16} \ar[ddd]^-{1_{K_{3}} \otimes \varphi_1} \ar[rr]^-{\delta_{3} \otimes 1_{S^{16}}} &&  K_2 \otimes_S S^{16}\\ \\ \\
K_4 \otimes_S S\ar[rr]^-{\delta_4 \otimes 1_S}  \ar[ddd]^-{1_{K_4} \otimes \varphi_0} &&  K_{3} \otimes_S S \\ \\ \\
K_4 \otimes_S R
}
\]
Let $u = (e_x \wedge e_y) \otimes (ab)$, and $v = (e_z \wedge e_w) \otimes (cd)$, inside $K_2(R) = K_2 \otimes_S R$. As they are cycles, we can consider their classes $\alpha = [u]$ and $\beta = [v]$ in homology. We want to construct a lifting $\psi(uv)$ of the Koszul cycle $uv = (e_x \wedge e_y \wedge e_z \wedge e_w) \otimes (abcd) \in K_4(R)$.
Given $(e_x \wedge e_y \wedge e_z \wedge e_w) \otimes (abcd) \in K_4 \otimes_S R$ we consider the lift $(e_x \wedge e_y \wedge e_z \wedge e_w)  \otimes  (abcd \ E^{(0)}_1) \in K_4 \otimes_S S$, and then apply the differential $\delta_4 \otimes 1_S$:
\[
\ds (\delta_4 \otimes 1_S)((e_x \wedge e_y \wedge e_z \wedge e_w) \otimes (abcd \ E^{(0)}_1)) = \begin{matrix} + (e_y \wedge e_z \wedge e_w) \otimes  (abcdx E^{(0)}_1)  \\
-  (e_x \wedge e_z \wedge e_w) \otimes (abcdy \ E^{(0)}_1) \\
+  (e_x\wedge e_y \wedge e_w) \otimes (abcdz \ E^{(0)}_1) \\
-  (e_x \wedge e_y \wedge e_z) \otimes (abcdw \ E^{(0)}_1)
\end{matrix}
\]
This is now a boundary, and, in fact, it is equal to
\[
\ds (1_{K_3} \otimes \varphi_1) \left(\begin{matrix} 
+ (e_y\wedge e_z \wedge e_w) \otimes (cd \ E^{(1)}_2) \\
- (e_x \wedge e_z \wedge e_w) \otimes (cd \ E^{(1)}_5) \\
+ (e_x\wedge e_y \wedge e_w) \otimes ( ab \ E^{(1)}_{12}) \\
- (e_x \wedge e_y \wedge e_z) \otimes (ab \ E^{(1)}_{15}) \end{matrix}\right)
\]
Now we apply $\delta_3 \otimes 1_{S^{16}}$ to this element:
\[
\ds (\delta_3 \otimes 1_{S^{16}}) \left(\begin{matrix}  
+ (e_y\wedge e_z \wedge e_w) \otimes (cd \ E^{(1)}_2) \\
- (e_x \wedge e_z \wedge e_w) \otimes (cd \ E^{(1)}_5) \\
+ (e_x\wedge e_y \wedge e_w) \otimes ( ab \ E^{(1)}_{12}) \\
- (e_x \wedge e_y \wedge e_z) \otimes (ab \ E^{(1)}_{15}) \end{matrix}\right) = \begin{matrix} + (e_z\wedge e_w) \otimes (cdy \ E^{(1)}_2 - cdx \ E^{(1)}_5) \\
 - (e_y \wedge e_w) \otimes (cdz \ E^{(1)}_2 - abx \ E^{(1)}_{12}) \\ 
+ (e_y \wedge e_z) \otimes (cdw \ E^{(1)}_2 - abx \ E^{(1)}_{15}) \\
+ (e_x \wedge e_w) \otimes (cdz \ E^{(1)}_5 - aby \ E^{(1)}_{12}) \\
- (e_x \wedge e_z) \otimes (cdw \ E^{(1)}_5 - aby \ E^{(1)}_{15}) \\ 
+ (e_x \wedge e_y) \otimes (abw \ E^{(1)}_{12} - abz \ E^{(1)}_{15}) 
\end{matrix}
\]
This is a boundary. Namely, it is equal to
\[
\ds (1_{K_2} \otimes \varphi_2)\left(\begin{matrix} 
\hspace{-3.4cm} - (e_z \wedge e_w) \otimes (cd \ E^{(2)}_{13}) \\
+ (e_y \wedge e_w) \otimes (dz \ E^{(2)}_3 +bx \ E^{(2)}_{17}+ bd \ E^{(2)}_{20}) \\
- (e_y \wedge e_z) \otimes (cw \ E^{(2)}_5 + bx \ E^{(2)}_{23} + bc \ E^{(2)}_{28}) \\
- (e_x \wedge e_w) \otimes (dz \ E^{(2)}_8 + ay \ E^{(2)}_{18} + ad \ E^{(2)}_{21}) \\
+(e_x \wedge e_z) \otimes (cw \ E^{(2)}_{10} + ay \ E^{(2)}_{24} + ac \ E^{(2)}_{29}) \\
\hspace{-3.4cm}-(e_x\wedge e_y) \otimes (ab \  E^{(2)}_{30})\end{matrix} \right)
\]
We now apply the map $\delta_2 \otimes 1_{S^{30}}$ to such a lift:
\[
\ds (\delta_2 \otimes 1_{S^{30}}) \left(\begin{matrix}
\hspace{-3.4cm} - (e_z \wedge e_w) \otimes (cd \ E^{(2)}_{13}) \\
+ (e_y \wedge e_w) \otimes (dz \ E^{(2)}_3 +bx \ E^{(2)}_{17}+ bd \ E^{(2)}_{20}) \\
- (e_y \wedge e_z) \otimes (cw \ E^{(2)}_5 + bx \ E^{(2)}_{23} + bc \ E^{(2)}_{28}) \\
- (e_x \wedge e_w) \otimes (dz \ E^{(2)}_8 + ay \ E^{(2)}_{18} + ad \ E^{(2)}_{21}) \\
+(e_x \wedge e_z) \otimes (cw \ E^{(2)}_{10} + ay \ E^{(2)}_{24} + ac \ E^{(2)}_{29}) \\
\hspace{-3.4cm} -(e_x\wedge e_y) \otimes (ab \  E^{(2)}_{30})\end{matrix} \right) = 
\]
\vspace{0.5cm}
\[
=  \begin{matrix} + e_x \otimes (dzw \ E^{2}_8 - czw \ E^{(2)}_{10} + ayw \  E^{(2)}_{18} + adw \ E^{(2)}_{21} - ayz \ E^{(2)}_{24} - acz \ E^{(2)}_{29} + aby \ E^{(2)}_{30}) \\
-e_y \otimes (dzw \  E^{(2)}_3 - czw \  E^{(2)}_5 + bxw \ E^{(2)}_{17} + bdw \ E^{(2)}_{20} - bxz \ E^{(2)}_{23} - bcz \ E^{(2)}_{28}+abx \  E^{(2)}_{30}) \\
+e_z \otimes (-cyw \ E^{(2)}_5 + cxw \ E^{(2)}_{10} + cdw \ E^{(2)}_{13} - bxy \ E^{(2)}_{23} + axy \ E^{(2)}_{24} - bcy \ E^{(2)}_{28} + acx \ E^{(2)}_{29}) \\
- e_w \wedge (-dyz \ E^{(2)}_3 + dxz \ E^{(2)}_8 + cdz \ E^{(2)}_{13} - bxy \ E^{(2)}_{17} + axy \ E^{(2)}_{18} - bdy \ E^{(2)}_{20}+adx \ E^{(2)}_{21})\end{matrix}
\]
Again, this element is a boundary. In fact, it is equal to
\[
\ds (1_{K_1} \otimes \varphi_3) \left(\begin{matrix} + e_x \otimes (zw \ E^{(3)}_7 + a \ E^{(3)}_{20}) \\
-e_y \otimes (zw \ E^{(3)}_4 + b \ E^{(3)}_{19}) \\
+ e_z \otimes (xy \ E^{(3)}_{13} + c \ E^{(3)}_{18}) \\
- e_w \otimes (xy \ E^{(3)}_{10} + d \ E^{(3)}_{17}) \end{matrix}\right)
\]
One more time, we apply $\delta_1 \otimes 1_{S^{20}}$, to get
\[
(\delta_1 \otimes 1_{S^{20}}) \left(\begin{matrix} + e_x \otimes (zw \ E^{(3)}_7 + a \ E^{(3)}_{20}) \\
-e_y \otimes (zw \ E^{(3)}_4 + b \ E^{(3)}_{19}) \\
+ e_z \otimes (xy \ E^{(3)}_{13} + c \ E^{(3)}_{18}) \\
- e_w \otimes (xy \ E^{(3)}_{10} + d \ E^{(3)}_{17}) \end{matrix}\right) = 
\]

\[
= \begin{matrix} 1 \otimes (-yzw \ E^{(3)}_4+xzw \ E^{(3)}_7-  xyw \ E^{(3)}_{10}+ xyz \  E^{(3)}_{13}-dw \ E^{(3)}_{17}+cz \ E^{(3)}_{18}-by \  E^{(3)}_{19} + ax \ E^{(3)}_{20}). \end{matrix}
\]
This is a boundary: it is equal to $(1_S \otimes \varphi_4)(1 \otimes E^{(4)}_5)$. When applying $\delta_0 \otimes 1_{S^5}$ to the lift, we finally get the image of $uv$ under the map $\psi:Z_4(R) \to k \otimes_S S^5$. Namely:
\[
\psi(uv) = (\delta_0 \otimes 1_{S^5})(1 \otimes E^{(4)}_5) = \overline{1} \otimes E^{(4)}_5 \in k \otimes_S S^5
\]
and since the latter is non-zero, because it is part of a $k$-basis of $k \otimes_S S^5$, we obtain that $uv$ is not a boundary of the Koszul complex. Thus, $\alpha\beta$ is non-zero in $H_4(R)$, and $R$ is not Golod.
\end{proof}
\begin{remark} As pointed out in Remark \ref{remSri}, with the same notation as in Example \ref{main}, we have that $x-a,y-b,z-c,w-d$ is a regular sequence modulo $I$. Going modulo such a regular sequence of linear forms, one recovers the ring of Example \ref{Sri}.
\end{remark}

Recall that a monomial ideal $I$ satisfies the strong-GCD condition (see \cite[Definition 3.8]{Joll}) if there exists a linear order $\prec$ on the set $\MinGen(I)$ of minimal monomial generators of $I$ such that, for any two monomials $u \prec v$ in $\MinGen(I)$, with $\gcd(u,v) = 1$, there exists a monomial $w \in \MinGen(I)$, $v \ne w$, with $u \prec w$ and such that $w$ divides $uv$. The ring in Example \ref{main} satisfies the strong-GCD condition, being a product (see \cite[Theorem 1.1]{FkhWel} and the discussion in the Introduction). We present here another ideal that satisfies the strong-GCD condition, that is not Golod. Although it is not a product, it has the advantage of having fewer generators than our previous ideals. Another example has been discovered by Lukas Katth{\"a}n  \cite{LK}, who considers the ideal $I=(x_1x_2y,x_2x_3y,x_3x_4y,x_4x_5,x_5x_1)$ in the polynomial ring $k[x_1,x_2,x_3,x_4,x_5,y]$.
\begin{example} \label{exstrgcd} Let $S = k[x,y,z]$, and let $I=(x^2y,xy^2,x^2z,y^2z,z^2)$. Set $R=S/I$. The ideal $I$ satisfies the strong-GCD condition, for example choosing $x^2y \prec xy^2 \prec x^2z \prec y^2z \prec z^2$. Using Macaulay2 \cite{Mac2}, we checked that the Poincar{\'e} series of $R$ starts as
\[
\ds P_R(t) = 1+3t+8t^2 + 21t^3+55t^4+144t^5+377t^6+ \ldots
\]
and that the right-hand side of Serre's inequality is
\[
\ds \frac{(1+t)^3}{1-5t^2-5t^3-t^4} = 1+3t+8t^2 + 21t^3+56t^4+148t^5+393t^6+ \ldots
\]
Therefore, $R$ is not Golod. Alternatively, one can use the Macaulay2 command \verb|isGolod(S/I)|, or one can show, with arguments similar to the ones used above, that the product of Koszuyl cycles
\[
\ds \left((e_x \wedge e_y) \otimes xy \right) \cdot \left(e_z \otimes z\right) \in K_3(R)
\]
is not zero in homology. Looking for a squarefree example, using polarization, one obtains that $I'=(axy,bxy,axz,byz,cz) \subseteq k[a,b,c,x,y,z]$ satisfies the strong GCD condition, and is not Golod.
\end{example}
\section{Strongly Golod property for rational powers of monomial ideals} \label{frac}
Let $k$ be a field, and let $S = k[x_1,\ldots,x_n]$, with $\deg(x_i) =  d_i >0$. We recall the definition of rational powers of an ideal.
\begin{definition} For an ideal $I \subseteq S$ and positive integers $p,q$ define the ideal
\[
\ds I^{p/q} := \{f \in R \mid f^q \in \ov{I^p}\}.
\]
\end{definition}
The integral closure of $I^p$ inside the definition is needed in order to make the set into an ideal, and to make it independent of the choice of the representation of $p/q$ as a rational number.
\begin{remark} We would like to warn the reader about a potential source of confusion. When $p=q$, the ideal $I^{p/q} = I^{1/1}$ is the integral closure $\overline{I}$ of $I$, and should not be regarded as the ideal $I^1 = I$, even though the exponents $1/1$ and $1$ are equal.
\end{remark}
\begin{remark} If $I \subseteq S$ is a monomial ideal, then so is $I^{p/q}$.
\end{remark}
\begin{proof} Let $f=\sum_{i=1}^d \lambda_i u_i \in I^{p/q}$, where $0 \ne \lambda_i \in k$ and $u_i$ are monomials. Since $\ov{I^p}$ is monomial, and we have that $f^{qr} \in I^{pr}$ for all integers $r \gg 0$ \cite[Theorem 1.4.2]{Herzog_Hibi}. Also, $I^{pr}$ is monomial, therefore every monomial appearing in $f^{qr}$ belongs to $I^{pr}$, and in particular for any $i=1,\ldots,d$ we have that $u_i^{qr} \in I^{pr}$ for all $r \gg 0$. This shows that $u_i^q \in \ov{I^{p}}$, that is $u_i \in I^{p/q}$ for all $i=1,\ldots,d$, and hence $I^{p/q}$ is monomial.
\end{proof}

In the rest of the section, we assume that the characteristic of $k$ is zero.
\begin{definition}[\cite{HerHun}] A proper homogeneous ideal $I \subseteq S$ is called {\it strongly Golod} if $\partial(I)^2 \subseteq I$. 
\end{definition} 
Here, $\partial(I)$ denotes the ideal of $S$ generated by the partial derivatives of elements in $I$. By \cite[Theorem 1.1]{HerHun}, if $I$ is strongly Golod, then $S/I$ is Golod. This condition, however, is only sufficient. For example, the ideal $I=(xy,xz) \subseteq k[x,y,z]$ is Golod \cite{Shamash}, or \cite[Proposition 5.2.5]{AvrInf}. However, it is not strongly Golod. This example is not even squarefree strongly Golod (see Section \ref{lcm} for the definition). In case $I$ is monomial, being strongly Golod is equivalent to the requirement that, for all minimal monomial generators $u,v \in I$, and all integers $i,j$ such that $x_i$ divides $u$ and $x_j$ divides $v$, one has $uv/x_ix_j \in I$. 

The following argument is a modification of \cite[Proposition 3.1]{HerHun}. 
\begin{theorem} \label{fractional_monomial} Let $I \subseteq S$ be a strongly Golod monomial ideal. If $p \gs q$, then $I^{p/q}$ is strongly Golod.
\end{theorem}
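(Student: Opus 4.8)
Since $I^{p/q}$ is a monomial ideal, the plan is to verify the combinatorial criterion for strong Golodness recalled above, after transporting it to the Newton polyhedron. For a monomial ideal $J$ write $E(J)=\{a\in\NN^n:x^a\in J\}$, and set $P=\mathrm{conv}(E(I))\subseteq\RR^n_{\geq0}$, the Newton polyhedron of $I$; it has recession cone $\RR^n_{\geq0}$ and its vertices are minimal generators of $I$. Since $\overline{I^p}$ is integrally closed with $E(\overline{I^p})=\NN^n\cap pP$, one obtains the clean description $E(I^{p/q})=\{a\in\NN^n:qa\in pP\}=\NN^n\cap\tfrac{p}{q}P$; thus $x^a\in I^{p/q}$ iff $\tfrac{q}{p}a\in P$ (and $I^{p/q}$ is proper, as $1\in I^{p/q}$ would force $I=S$). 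Given minimal generators $x^a,x^b$ of $I^{p/q}$ and indices with $a_i,b_j\geq1$, the criterion asks that $x^{a+b-e_i-e_j}\in I^{p/q}$. Putting $\gamma=\tfrac{q}{p}a$, $\delta=\tfrac{q}{p}b\in P$ and $t=q/p$, we have $0<t\leq1$ (as $p\geq q$), $\gamma_i,\delta_j\geq t$, and $\tfrac{q}{p}(a+b-e_i-e_j)=\gamma+\delta-te_i-te_j$, so the whole theorem reduces to the scaling property
\[
(\star)\qquad \gamma,\delta\in P,\ 0<t\leq1,\ \gamma_i\geq t,\ \delta_j\geq t\ \Longrightarrow\ \gamma+\delta-te_i-te_j\in P.
\]

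First I would reduce $(\star)$ to its case $t=1$ by boosting: as $P+\RR^n_{\geq0}=P$, the points $\gamma'=\gamma+(1-t)e_i$ and $\delta'=\delta+(1-t)e_j$ lie in $P$, satisfy $\gamma'_i,\delta'_j\geq1$, and $\gamma'+\delta'-e_i-e_j=\gamma+\delta-te_i-te_j$. So it suffices to prove that $\gamma+\delta-e_i-e_j\in P$ whenever $\gamma,\delta\in P$ and $\gamma_i,\delta_j\geq1$, which I would do by checking the defining inequalities of $P$. Nonnegativity is immediate from $\gamma_i,\delta_j\geq1$. Since $P$ has recession cone $\RR^n_{\geq0}$, each facet inequality may be written $\langle c,x\rangle\geq\beta_c$ with $c\geq0$ and $\beta_c=\min_{a\in E(I)}\langle c,a\rangle\geq0$, and using $\gamma_i,\delta_j\geq1$ and $c\geq0$ one has $\langle c,\gamma\rangle\geq\max(\beta_c,c_i)$ and $\langle c,\delta\rangle\geq\max(\beta_c,c_j)$. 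These bounds already give $\langle c,\gamma\rangle+\langle c,\delta\rangle\geq\beta_c+c_i+c_j$ whenever $c_i=0$, $c_j=0$, or $\beta_c=0$. The remaining case is $c_i,c_j>0$ and $\beta_c>0$, where everything comes down to the geometric claim
\[
(\dagger)\qquad \beta_c\geq c_i+c_j\quad\text{for every facet normal }c\text{ with }c_i,c_j>0,
\]
since then $\langle c,\gamma\rangle+\langle c,\delta\rangle\geq2\beta_c\geq\beta_c+c_i+c_j$.

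To prove $(\dagger)$ I would use strong Golodness of $I$ through the monomial criterion. The facet $F=\{x\in P:\langle c,x\rangle=\beta_c\}$ must contain a vertex $a$ with $a_i\geq1$: otherwise all vertices of $F$ have $i$-th coordinate $0$, and because $c_i>0$ forces every recession direction of $F$ to have $i$-th coordinate $0$, the facet would lie in $\{x_i=0\}$, forcing $c$ to be a multiple of $e_i$ and contradicting $c_j>0$. Symmetrically $F$ has a vertex $b$ with $b_j\geq1$. As vertices of $P$ are minimal generators, the criterion gives $x^{a+b-e_i-e_j}\in I$, hence $\langle c,a+b-e_i-e_j\rangle\geq\beta_c$; since $\langle c,a\rangle=\langle c,b\rangle=\beta_c$ this is exactly $\beta_c\geq c_i+c_j$. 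When $i=j$ the same argument with $a=b$ applies unless $c$ is proportional to $e_i$; but strong Golodness rules out $\min_a a_i=1$ (a generator $u$ with $u_i=1$ would give $u^2/x_i^2\in I$ of $i$-th degree $0$, forcing $\min_a a_i=0$), so in that degenerate case $\beta_c\geq2c_i$ holds directly.

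I expect $(\dagger)$ to be the main obstacle, as it is the one point where strong Golodness of $I$ must be converted into a property of the facets of its Newton polyhedron; by contrast the reductions to $(\star)$ and to the case $t=1$ are formal, and the facts about Newton polyhedra of integral closures are standard \cite{Herzog_Hibi}.
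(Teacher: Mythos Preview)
Your argument is correct and takes a genuinely different route from the paper's proof. The paper argues directly with monomials: for a generator $u\in I^{p/q}$ it factors $u^{qr}=m_1\cdots m_{pr}$ with $m_i\in I$, counts how many $m_i$ are divisible by $x_j$ (and to what power), and uses the inequality $p\geq q$ together with $\partial(I)^b\subseteq I^{\lfloor b/2\rfloor}$ to conclude $(u/x_j)^{qr}\in I^{pr/2}$ for even $r\gg0$; pairing two such estimates gives $(uv/x_ix_j)^{qr}\in I^{pr}$. No convex geometry enters.

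Your approach, by contrast, translates everything to the Newton polyhedron $P$ and isolates the single inequality $(\dagger)$: every bounded facet normal $c$ with $c_i,c_j>0$ satisfies $\beta_c\geq c_i+c_j$. You deduce $(\dagger)$ from strong Golodness of $I$ by choosing vertices on the facet, and then the theorem becomes the observation that $(\dagger)$ is scale-invariant, so it holds for $\tfrac{p}{q}P$ as well. What your route buys is a clean structural explanation: strong Golodness of a monomial ideal is (via $(\dagger)$) a property of the Newton polyhedron alone, hence automatically shared by every $I^{p/q}$ with $p\geq q$, including $\overline{I}=I^{1/1}$; the paper's Corollary~\ref{int_cl} drops out without a separate argument. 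The paper's approach, on the other hand, is more elementary (no polyhedral machinery), stays closer to the techniques of \cite{HerHun}, and its intermediate estimate $(u/x_j)^{qr}\in I^{pr/2}$ is reusable in related arguments such as Proposition~\ref{fractional_monomial_gen}. Two minor remarks on your write-up: in the facet argument, once you deduce $F\subseteq\{x_i=0\}$ you already contradict $\beta_c>0$ (plug in $x=0$), so invoking $c_j>0$ is unnecessary; and ``its vertices are minimal generators'' should be read as ``each vertex is the exponent of some minimal generator'' (the converse can fail, but you only use this direction).
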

\begin{proof} Let $u \in I^{p/q}$ be a monomial generator, then $u^{qr} \in I^{pr}$ for all $r \gg 0$. Let $j$ be an index such that $x_j \mid u$, we claim that $(u/x_j)^{qr} \in I^{pr/2}$ for all even $r\gg 0$. Notice that if $x_j^2 \mid u$ then, for any even $r$, $r \gg 0$, we have
\[
\ds \left(\frac{u}{x_j}\right)^{qr} = u^{q(r/2)} \left(\frac{u}{x_j^2}\right)^{qr/2} \in I^{p(r/2)},
\]
as desired. Now suppose that $x_j$ divides $u$, but $x_j^2$ does not. Since for any $r \gg 0$ we have that $u^{qr} \in I^{pr}$, we can write
\[
u^{qr} = m_1 m_2 \cdots m_{pr},
\]
where $m_i \in I$ for all $i$. Again, we can assume that $r$ is even. For $i =1,\ldots,pr$ let $d_i$ be the maximum non-negative integer such that $x_j^{d_i}$ divides $m_i$. Then we can rewrite
\[
u^{qr} = m_1 \cdots m_a m_{a+1} \cdots m_{a+b} m_{a+b+1} \cdots m_{pr}, 
\]
where $d_i=0$ for $1\ls i \ls a$, $d_i=1$ for $a+1 \ls i \ls a+b$ and $d_i \gs 2$ for $a+b+1 \ls i \ls pr$. Because of the assumption $x_j^2 \not| \ u$ we have that 
\[
\ds qr = \sum_{i=1}^{pr} d_i = b+\sum_{i=a+b+1}^{pr} d_i \gs b + 2(pr-b-a).
\]
But we assumed that $p \gs q$, therefore $pr \gs b + 2(pr-b-a)$, which gives $a+b/2 \gs pr/2$ and also $a+\lfloor b/2 \rfloor \gs pr/2$ because $pr/2$ is an integer. Write
\[
\ds \left(\frac{u}{x_j}\right)^{qr} =  \frac{u^{qr}}{x_j^{qr}} = m_1 \cdots m_a \frac{m_{a+1}}{x_j} \cdots \frac{m_{a+b}}{x_j} \frac{m_{a+b+1} \cdots m_{pr}}{x_j^{qr-b}},
\]
then $m_{a+1}\ldots m_{a+b}/x_j^b \in I^{\lfloor b/2 \rfloor}$ because $I$ is strongly Golod, so that $\partial(I)^b \subseteq I^{\lfloor \frac{b}{2} \rfloor}$. Furthermore, $m_1\cdots m_a \in I^a$. Therefore
\[
\ds \left(\frac{u}{x_j} \right)^{qr} \in I^{a+\lfloor b/2 \rfloor} \subseteq I^{pr/2}.
\]
Now let $v \in I^{p/q}$ be another monomial generator, and assume that $x_i | v$. Then, for all even $r \gg 0$ we have
\[
\ds \left(\frac{uv}{x_jx_i} \right)^{qr} \in I^{pr}
\]
which implies that $uv/x_jx_i \in I^{p/q}$. Since $u$ and $v$ were arbitrary monomial generators, $I^{p/q}$ is strongly Golod.
\end{proof}
\begin{corollary}\cite[Proposition 3.1]{HerHun} \label{int_cl} Let $I \subseteq S$ be a monomial strongly Golod ideal, then $\ov I$ is strongly Golod.
\end{corollary}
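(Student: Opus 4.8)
The plan is to deduce this immediately from Theorem \ref{fractional_monomial} by specializing to the boundary case $p = q = 1$. The only conceptual point to make is the identification of $\ov{I}$ with a rational power: by the very definition of $I^{p/q}$, taking $p = q = 1$ gives
\[
I^{1/1} = \{f \in S \mid f^1 \in \ov{I^1}\} = \ov{I},
\]
which is exactly the content of the warning in the earlier remark (namely that $I^{1/1}$ should be read as the integral closure $\ov{I}$, not as $I$ itself).

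With this identification in hand, I would simply verify that the hypotheses of Theorem \ref{fractional_monomial} are met. The ideal $I$ is assumed to be a strongly Golod monomial ideal, so the standing hypothesis of the theorem holds. The numerical condition $p \gs q$ becomes $1 \gs 1$, which is satisfied. Applying the theorem therefore yields that $I^{1/1}$ is strongly Golod, and by the identification above this says precisely that $\ov{I}$ is strongly Golod, as claimed.

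I do not expect any genuine obstacle here: all of the real work (the combinatorial bookkeeping with the exponents $d_i$, the inequality $a + \lfloor b/2 \rfloor \gs pr/2$, and the invocation of strong Golodness through $\partial(I)^b \subseteq I^{\lfloor b/2 \rfloor}$) has already been absorbed into the proof of Theorem \ref{fractional_monomial}. The corollary is thus a one-line specialization. The only thing worth flagging, to avoid confusion, is the non-obvious meaning of the exponent $1/1$ pointed out in the remark; stating this explicitly is what makes the deduction transparent rather than appearing to conflate $\ov{I}$ with $I$.
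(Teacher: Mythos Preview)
Your proposal is correct and matches the paper's proof exactly: the paper simply says ``Choose $p=q$ in Theorem \ref{fractional_monomial},'' which is precisely your specialization (and any choice of $p=q$ gives $I^{p/q}=\ov I$ since rational powers depend only on the ratio). Your added remark about the meaning of $I^{1/1}$ is a helpful clarification but not additional mathematical content.
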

\begin{proof} Choose $p=q$ in Theorem \ref{fractional_monomial}.
\end{proof}
\begin{proposition} \label{fractional_monomial_gen} Let $I \subseteq S$ be a monomial ideal. If $p \gs 2q$, then $I^{p/q}$ is strongly Golod.
\end{proposition}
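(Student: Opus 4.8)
The plan is to follow the structure of the proof of Theorem~\ref{fractional_monomial}, but to replace the use of the strongly Golod hypothesis (which there supplied the containment $\partial(I)^b \subseteq I^{\lfloor b/2 \rfloor}$) by an elementary ``deletion'' estimate valid for an \emph{arbitrary} monomial ideal, at the cost of the stronger numerical hypothesis $p \gs 2q$. Throughout I would use the characterization recorded in the discussion above: for a monomial $g$, one has $g \in I^{p/q}$ if and only if $g^{qr} \in I^{pr}$ for all $r \gg 0$, together with the convention $I^m = S$ for $m \ls 0$.

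The key step is the following estimate, which I expect to be the main obstacle to set up correctly. If $w$ is a monomial lying in $I^N$ and $x_\ell^t$ divides $w$, then $w/x_\ell^t \in I^{N-t}$. To prove it, write $w = m_1 \cdots m_N$ with each $m_s \in I$, let $d_s$ be the exponent of $x_\ell$ in $m_s$ (so $\sum_s d_s \gs t$), and greedily select a set $T$ of indices with $d_s \gs 1$ and $\sum_{s \in T} d_s \gs t$; since each selected factor contributes at least $1$, we may take $|T| \ls t$. Then
\[
w/x_\ell^t = \big(\textstyle\prod_{s \in T} m_s \big/ x_\ell^t\big)\cdot \textstyle\prod_{s \notin T} m_s,
\]
where the first factor is a genuine monomial and the second lies in $I^{N - |T|} \subseteq I^{N-t}$, so $w/x_\ell^t \in I^{N-t}$. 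The crucial and slightly delicate point is precisely this: without strong Golodness one cannot in general strip a single $x_\ell$ from an individual generator and remain in $I$, but one \emph{can} delete whole generators to account for $x_\ell^t$ and absorb the leftover pieces as a harmless monomial multiplier, losing only one power of $I$ per variable removed.

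With the estimate in hand the proposition is immediate. Let $u,v \in I^{p/q}$ be minimal monomial generators and let $x_j \mid u$ and $x_i \mid v$; I must show $uv/(x_ix_j) \in I^{p/q}$. For all $r \gg 0$ we have $u^{qr}, v^{qr} \in I^{pr}$, and since $x_j \mid u$ and $x_i \mid v$ we have $x_j^{qr} \mid u^{qr}$ and $x_i^{qr} \mid v^{qr}$. Applying the estimate gives
\[
(u/x_j)^{qr} = u^{qr}/x_j^{qr} \in I^{pr-qr}, \qquad (v/x_i)^{qr} = v^{qr}/x_i^{qr} \in I^{pr-qr}.
\]
Multiplying these and using $x_ix_j \mid uv$ yields $\left(uv/(x_ix_j)\right)^{qr} \in I^{2pr-2qr}$; finally the hypothesis $p \gs 2q$ gives $2pr - 2qr \gs pr$, so $\left(uv/(x_ix_j)\right)^{qr} \in I^{pr}$ for all $r \gg 0$. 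By the characterization this means $uv/(x_ix_j) \in I^{p/q}$, and since $u,v,i,j$ were arbitrary, $I^{p/q}$ is strongly Golod. Note that this argument needs neither the parity of $r$ nor the splitting into the cases $d_s = 1$ and $d_s \gs 2$ used in Theorem~\ref{fractional_monomial}: the single deletion estimate does all the work, which is exactly what permits dropping the strongly Golod hypothesis on $I$ in exchange for $p \gs 2q$.
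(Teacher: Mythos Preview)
Your proof is correct and follows essentially the same route as the paper's. Both arguments show $(u/x_j)^{qr}\in I^{pr-qr}$ by absorbing the factor $x_j^{qr}$ into $qr$ of the monomial generators in a factorization $u^{qr}=m_1\cdots m_{pr}$, and then use $p\gs 2q$ to land back in $I^{pr}$ after multiplying. Your explicit ``deletion estimate'' makes rigorous the reordering that the paper leaves implicit when it writes $m_1\cdots m_{pr-qr}\cdot\dfrac{m_{pr-qr+1}\cdots m_{pr}}{x_j^{qr}}$, and your direct inequality $2(pr-qr)\gs pr$ avoids the paper's harmless but unnecessary reduction to $p$ even.
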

\begin{proof} Since the ideal $I^{p/q}$ does not depend on the representation of $p/q$ as a rational number, without loss of generality we can assume that $p$ is even. Let $u \in I^{p/q}$ be a monomial generator, so that $u^{qr} \in I^{pr}$ for all $r \gg 0$. Let $j$ be such that $x_j \mid u$, then we can write
\[
\ds \left(\frac{u}{x_j}\right)^{qr} = \frac{u^{qr}}{x_j^{qr}} = \frac{m_1 \ldots m_{pr}}{x_j^{qr}} = m_1 \ldots m_{pr-qr} \cdot \frac{m_{pr-qr+1} \ldots m_{pr}}{x_j^{qr}},
\]
for some $m_i \in I$. But then $(u/x_j)^{qr} \in I^{pr-qr} \subseteq I^{pr/2}$ because $q \ls p/2$. Let $v \in I^{p/q}$ be another monomial generator and assume that $x_i \mid v$. For $r\gg 0$ we have again that $(v/x_i)^{qr} \in I^{pr/2}$, so that
\[
\ds \left( \frac{uv}{x_jx_i}\right)^{qr} \in I^{pr}
\]
for all $r \gg 0$, and thus $uv/x_jx_i \in I^{p/q}$. Since $u$ and $v$ were arbitrary, we have that $I^{p/q}$ is strongly Golod.
\end{proof}
If $I$ is not strongly Golod and $2q > p \gs q$, it is not true in general that $I^{p/q}$ is strongly Golod, as the following family of examples shows.
\begin{example} \label{exs} Let $2q > p \gs q$ be two positive integers and consider the ideal $I=(xy,z^q)$ inside the polynomial ring $S=k[x,y,z]$, where $k$ is a field of characteristic zero. Then
\[
(xy)^q (z^q)^{p-q} = (xyz^{p-q})^q \in I^p,
\]
that is $xyz^{p-q} \in I^{p/q}$. Thus, $u:=y^2z^{2p-2q} \in \partial(I^{p/q})^2$. On the other hand, $u^{q} =y^{2q}z^{2pq-2q^2} \notin \ov{I^{p}}$ because the only monomial generator of $I^p$ that can appear in an integral relation for $u$ is $z^pq$. But 
\[
\ds y^{2qn}z^{2pqn-2q^2n} \notin (z^{pqn})
\]
for any $n$ because $pqn > 2pqn-2q^2n \Longleftrightarrow p<2q$, and we have the latter by assumption. As a consequence, $u \notin I^{p/q}$, and thus $I^{p/q}$ is not strongly Golod.
\end{example}
\begin{remark}
If we choose $p=q=2$ in Example \ref{exs}, we have in addition that $I^{p/q} = \ov I = I = (xy,z^2)$ is not even Golod, because it is a complete intersection of height two.
\end{remark}

As a consequence, not all integrally closed ideals, even if assumed monomial, are Golod. A more trivial example is the irrelevant maximal ideal $\m$. However, as noted above in Corollary \ref{int_cl}, if $I$ is a strongly Golod monomial ideal, then $\ov I$ is strongly Golod. More generally, if $I \subseteq S=k[x_1,\ldots,x_n]$ is homogeneous, then $\ov{I^j}$ is strongly Golod for all $j \gs n+1$ \cite[Theorem 2.11]{HerHun}. It is still an open question whether $\ov{I^j}$ is strongly Golod, or, at least, Golod, for any ideal $I$ and $j \gs 2$. Since for $j \gs 2$, the ideal $I^j$ is strongly Golod, one can ask the following more general question, which has already been raised by Craig Huneke:
\begin{question}\cite[Problem 6.19]{McCPeeva} \label{quest_int} Let $I \subseteq S$ be a strongly Golod ideal. Is $\ov{I}$ [strongly] Golod?
\end{question}
\begin{remark} We checked with Macaulay2 \cite{Mac2} that the ideal $I$ of Example \ref{main} is integrally closed. Therefore, the integral closure of a product of ideals, even monomial ideals in a polynomial ring, may not be Golod.
\end{remark}
We end the section with a more generic question about Golodness of the ideal $I^{3/2}$. Note that for each ideal $I = (xy,z^q)$ of the family considered in Example \ref{exs}, the rational power $I^{3/2}$ is not strongly Golod. However, it is Golod. In fact, it is not hard to see that $\ov{I^3} = I^3 = (x^3y^3,x^2y^2z^q,xyz^{2q},z^{3q})$. As a consequence, we have $I^{3/2} = (x^2y^2,xyz^{\lceil\frac{q}{2}\rceil},z^{\lceil\frac{3q}{2}\rceil})$. Consider the linear form $x-y \in \m \smallsetminus \m^2$, which is a non-zero divisor modulo $I^{3/2}$. The image of $I^{3/2}$ in the polynomial ring $S' = S/(x-y) \cong k[x,z]$ is $(x^4,x^2z^{\lceil\frac{q}{2}\rceil},z^{\lceil\frac{3q}{2}\rceil})$. Such an ideal is easily seen to be strongly Golod, hence Golod. By \cite[Proposition 5.2.4 (2)]{AvrInf}, the ideal $I^{3/2}$ is then Golod. 
\begin{question} Let $I \subseteq S$ be a proper homogeneous ideal. Is $I^{3/2}$ always Golod? Is it true if $I$ is monomial?
\end{question}

\section{$\lcm$-strongly Golod monomial ideals} \label{lcm}
Let $k$ be a field, and let $S=k[x_1,\ldots,x_n]$, with $\deg(x_i) = d_i > 0$. 

\begin{definition}
Let $m \in S$ be a monomial, and let $I \subseteq S$ be a monomial ideal. Define $I_{m} \subseteq I$ to be the ideal of $S$ generated by the monomials of $I$ which divide $m$. We say that $I$ is $m$-divisible if $I = I_m$.
\end{definition}
\begin{remark}\label{sqfstrg} Note that, choosing $m=x_1\cdots x_n$, then $m$-divisible simply means squarefree. 
\end{remark}

We now recall the Taylor resolution of a monomial ideal. Let  $I \subseteq S$ be a monomial ideal, with minimal monomial generating set $\{m_1,\ldots,m_t\}$. For each subset $\Lambda \subseteq [t] :=\{1,\ldots,t\}$ let $L_\Lambda:= \lcm(m_i \mid i \in \Lambda)$. Let $a_\Lambda \in \NN^n$ be the exponent vector of the monomial $L_\Lambda$, and let $S(-a_\Lambda)$ be the free module, with generator in multi-degree $a_\Lambda$. Consider the free modules $T_i:= \bigoplus_{|\Lambda| = i} S(-a_\Lambda)$, with basis $\{e_\Lambda \}_{|\Lambda| = i}$. Also, set $F_0:=S$. The differential $\tau_i:T_{i} \to T_{i-1}$ acts on an element of the basis $e_{\Lambda}$, for $\Lambda \subseteq [t]$, $|\Lambda|=i$, as follows:
\[
\ds \tau_i(e_\Lambda) = \sum_{j \in \Lambda} {\rm sign}(j,\Lambda) \cdot \frac{L_{\Lambda}}{L_{\Lambda \smallsetminus \{j\}}}\cdot e_{\Lambda \smallsetminus \{j\}}
\]
Here ${\rm sign}(j,\Lambda)$ is $(-1)^{s+1}$ if $j$ is the $s$-th element in the ordering of $\Lambda \subseteq [t]$. The resulting complex is a free resolution of $S/I$ over $S$, called the Taylor resolution. The following was already noted in \cite[Corollary 3.2]{BrHe_Multigraded}, and \cite[Corollary to Theorem 1]{Sri_Shifts}.
\begin{remark} \cite[Corollary 3.2]{BrHe_Multigraded} \label{res_m_graded} Let $I$ be an $m$-divisible monomial ideal. Then, the Koszul homology $H_\bullet(S/I)$ is $\ZZ^n$-multigraded, and it is concentrated in multidegrees $a_{\Lambda'} \in \NN^n$ such that the monomial $x_1^{(a_{\Lambda'})_1}\cdots x_n^{(a_\Lambda')_n} = L_{\Lambda'}$ divides $m$.
\end{remark}
In \cite{HerHun}, given a squarefree monomial ideal, Herzog and Huneke introduce the notion of squarefree strongly Golod monomial ideal. Given Remark \ref{sqfstrg}, we generalize it to the notion of $\lcm$-strongly Golod. Let $I$ be a monomial ideal, and let $m:=\lcm(I)$ be the least common multiple of the monomials appearing in the minimal monomial generating set of $I$. By definition, $I$ is always $m$-divisible. Also, if $I$ is $m'$-divisible for some other monomial $m'$, then $m$ divides $m'$.

In what follows, we assume that the characteristic of $k$ is zero.
\begin{definition}
Let $I \subseteq S$ be a monomial ideal, and let $m:=\lcm(I)$ be as defined above. Let $\partial (I)^{[2]}$ denote the ideal $(\partial (I)^2)_{m}$. We say that $I$ is $\lcm$-strongly Golod if $\partial(I)^{[2]} \subseteq I$.
\end{definition}
We also make the following definition
\begin{definition} Let $I \subseteq S$ be a proper homogeneous ideal. We say that $R=S/I$ is weakly Golod if the multiplication on Koszul homology is identically zero.
\end{definition}

The following is the main result of the section, and justifies the previous definition. It is a generalization of \cite[Theorem 3.5]{HerHun}.
\begin{theorem} \label{lcm_main}
Let $I \subseteq S$ be an $\lcm$-strongly Golod monomial ideal. Then, $S/I$ is weakly Golod.
\end{theorem}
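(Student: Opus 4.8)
The plan is to prove directly that every product of two positive-degree Koszul homology classes of $R = S/I$ vanishes. Fix homogeneous cycles $u \in Z_p(R)$ and $v \in Z_q(R)$ with $p,q \gs 1$, of $\ZZ^n$-multidegrees $c_1$ and $c_2$; it suffices to show $[u][v] = 0$ in $H_{p+q}(R)$, and we may assume both classes are nonzero. I would first choose monomial lifts $\tilde u, \tilde v$ in the Koszul complex $K_\bullet(S)$ over $S$. Because $u$ and $v$ are cycles over $R$, all coefficients of $\delta\tilde u$ and $\delta\tilde v$ lie in $I$.

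The main tool is the Cartan homotopy $d : K_\bullet(S) \to K_{\bullet+1}(S)$ acting on coefficients by $d(e_T \otimes f) = \sum_{i \notin T} \pm (e_i \wedge e_T)\otimes \partial f/\partial x_i$. A direct check on basis elements shows that $\delta d + d\delta$ acts as multiplication by the total degree $|c|$ on the multidegree-$c$ component; since $p \gs 1$ forces $|c_1| \gs 1$ (and likewise $|c_2| \gs 1$) and $\Char k = 0$, this scalar is invertible. Applying the homotopy to $\tilde u$ gives $\tilde u = |c_1|^{-1}\delta(d\tilde u) + |c_1|^{-1} d(\delta \tilde u)$; after reducing mod $I$ the first summand is a boundary, so $[u]$ is represented by $u^* := |c_1|^{-1}\, d(\delta\tilde u)$. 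The crucial bookkeeping is that $d$ applies a single partial derivative to the coefficients of $\delta\tilde u$, which lie in $I$, so every monomial actually occurring in $u^*$ lies in $\partial(I)$; the same holds for $v^* := |c_2|^{-1}\, d(\delta\tilde v)$. Consequently $[u][v]$ is represented by $\overline{u^* \wedge v^*}$, whose monomial coefficients all lie in $\partial(I)\cdot\partial(I) = \partial(I)^2$ and divide $c_1 c_2$.

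Next I would split into two cases according to whether $c_1 c_2$ divides $m = \lcm(I)$. If $c_1 c_2 \nmid m$, then by Remark \ref{res_m_graded} the homology $H_{p+q}(R)$ vanishes in multidegree $c_1 c_2$, so $[u][v] = 0$ for free. If instead $c_1 c_2 \mid m$, then every monomial coefficient of $u^* \wedge v^*$ divides $c_1 c_2$ and hence divides $m$; being an element of $\partial(I)^2$ that divides $m$, it lies in $(\partial(I)^2)_m = \partial(I)^{[2]} \subseteq I$ by the $\lcm$-strongly Golod hypothesis. Thus $\overline{u^* \wedge v^*} = 0$ in $K_{p+q}(R)$, and again $[u][v]=0$. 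In both cases the product vanishes, so the multiplication on $H_\bullet(R)_{\gs 1}$ is identically zero and $R$ is weakly Golod.

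I expect the dichotomy in the last paragraph to be the heart of the matter: it is exactly what upgrades the homotopy argument from requiring the full strongly Golod condition $\partial(I)^2 \subseteq I$, as in \cite[Theorem 1.1]{HerHun}, to the weaker $\lcm$-strongly Golod condition. The point is that the multigrading of Remark \ref{res_m_graded} confines all nonzero homology to multidegrees dividing $m$, so the only products one must actively kill have representative coefficients that already divide $m$ — precisely the monomials controlled by $\partial(I)^{[2]}$. The remaining work, namely verifying the Cartan homotopy identity and checking that the representatives $u^*, v^*$ genuinely have coefficients in $\partial(I)$, is routine once the multigraded normalization of the cycles is written out explicitly.
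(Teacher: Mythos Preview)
Your argument is correct and follows the same two-step structure as the paper's proof: split on whether the product multidegree divides $m=\lcm(I)$, invoke Remark~\ref{res_m_graded} when it does not, and when it does, use cycle representatives with coefficients in $\partial(I)$ so that the product has coefficients in $(\partial(I)^2)_m=\partial(I)^{[2]}\subseteq I$. The only difference is that the paper obtains those representatives by citing Herzog's canonical construction \cite{HerCycles}, whereas you produce them directly via the Euler homotopy $\delta d+d\delta=|c|\cdot\mathrm{id}$; this makes your version self-contained but is not a genuinely different route.
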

\begin{proof}
Let $m := \lcm(I)$, so that $I$ is $m$-divisible and $\partial(I)^{[2]} \subseteq I$. By Remark \ref{res_m_graded}, we can choose a $k$-basis of $H_\bullet(S/I)$ consisting of elements of multidegrees $\alpha_\Lambda$, where $\underline{x}^{\alpha_{\Lambda}}$ divides $m$. Let $a,b$ be two such elements. If $ab$ has multidegree $\alpha \in \NN^n$, such that $\underline{x}^{\alpha}$ does not divide $m$, then necessarily $ab=0$ because of the multigrading on $H_\bullet(S/I)$. So assume that the multidegree $\alpha$ of $ab$ is such that $\underline{x}^{\alpha}$ divides $m$. By \cite{HerCycles}, $a$ and $b$ can be represented by cycles whose coefficients are $k$-linear combinations of elements in $\partial(I)$. Since $I$ is monomial, so is $\partial(I)$. Because of the multidegree of $ab$, we then have that $a$ and $b$ can be represented by cycles whose coefficients are $k$-linear combinations of monomials $u,v \in \partial(I)$, such that the products $uv$ divide $m$. Then $uv \in \partial(I)^{[2]} \subseteq I$ for each product $uv$ appearing in these sums, and, as a consequence, $ab = 0$ in $H_\bullet(S/I)$.
\end{proof}
\begin{disc} It is easy to see that being $\lcm$-strongly Golod is only sufficient to be weakly Golod. For example, the ideal $(xy,xz) \subseteq k[x,y,z]$ is even Golod \cite{Shamash}, but not $\lcm$-strongly Golod. The proof of Theorem \ref{lcm_main}, as well as the proofs of \cite[Theorem 1.1]{HerHun} and \cite[Theorem 3.5]{HerHun}, are based on a canonical description of Koszul cycles whose residue classes form a $k$-basis for the Koszul homology $H_\bullet(S/I)$ \cite{HerCycles}. We want to suggest a slightly different definition of strong Golodness:

Potentially, one has to check that $\partial_i(f) \partial_j(g) \in I$ for any $f,g \in I$, and any $i,j = 1,\ldots, n$, where $\partial_i = \partial/\partial x_i$ and $\partial_j = \partial/\partial x_j$. However, by \cite{HerCycles}, each $\partial_i(f)$ appears as a factor in some coefficient of a Koszul cycle, which has the form $\ds (e_i \wedge \ldots) \otimes \partial_i(f) \in K_\bullet \otimes S/I = K_\bullet(S/I)$. Therefore, the corresponding product $\partial_i(f) \partial_j(g)$ will appear inside some coefficient of the form
\[
\ds (e_i \wedge e_j \wedge \ldots) \otimes \partial_i(f) \partial_j(g).
\]
For $i=j$, we have that $e_i \wedge e_j = 0$. Hence we may consider only products $\partial_i(f) \partial_j(g)$, for $i\ne j$, in the definition of strongly Golod and $\lcm$-strongly Golod. With this modification, the ideal $(xy,xz)$ becomes $\lcm$-strongly Golod. The ideal $(x^2,xy)$ in the polynomial ring $k[x,y]$, which is $\lcm$-strongly Golod, with this modification becomes strongly Golod. In fact, $\frac{xy}{x} \cdot \frac{xy}{x} = y^2 \notin (x^2,xy)$ is the product that is preventing it from being strongly Golod. However, the partial derivatives, in this case, are both with respect to $x$, so we can disregard such a product.
\end{disc}
Here follows an example of a non-squarefree ideal which is $\lcm$-strongly Golod, but not strongly Golod, even with the modified definition.
\begin{example} Let $k$ be a field of characteristic zero, and let $I = (x^2y^2,x^2z,y^2z) \subseteq k[x,y,z]$. Then $I$ is not strongly Golod, even in the definition suggested above. In fact, $xz,yz \in \partial(I)$ come from taking derivative with respect to $x$ and $y$, respectively, but their product is $xz \cdot yz \notin I$. However, such an element does not divide $\lcm(I) = x^2y^2z$, therefore it can be disregarded when looking at the $\lcm$-strongly Golod condition. In fact, one can check that $\partial(I)^{[2]} \subseteq I$, that is, $I$ is $\lcm$-strongly Golod in this case.
\end{example}
As shown in \cite[Proposition 3.7]{HerHun} for the squarefree part, if $m$ is a monomial in $S$, then the $m$-divisible part of a strongly Golod monomial ideal is $\lcm$-strongly Golod. We record it in the next proposition.
\begin{proposition}
Let $I \subseteq S$ be a strongly Golod monomial ideal, and let $m$ be a monomial. Then $I_m$ is $\lcm$-strongly Golod. In particular, $I$ is $\lcm$-strongly Golod.
\end{proposition}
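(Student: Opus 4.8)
The plan is to reduce everything to the combinatorial characterization of the strongly Golod condition for monomial ideals recorded in Section~\ref{frac}, together with an explicit description of the minimal generators of $I_m$. First I would record two elementary facts. Since $\operatorname{char}(k)=0$, for any monomial ideal $J$ the ideal $\partial(J)$ is the monomial ideal generated by $\{u/x_i : u \in \MinGen(J),\ x_i \mid u\}$, so $\partial(J)^2$ is generated by the products $uv/(x_ix_j)$ with $u,v \in \MinGen(J)$, $x_i \mid u$ and $x_j \mid v$. Second, I claim that $\MinGen(I_m) = \{u \in \MinGen(I) : u \mid m\}$. Indeed, if $u \in \MinGen(I)$ divides $m$, then $u$ cannot factor through a proper divisor lying in $I_m \subseteq I$ without contradicting its minimality in $I$; conversely any minimal generator of $I_m$ is a monomial of $I$ dividing $m$, hence a multiple of some $u \in \MinGen(I)$ with $u \mid m$, and minimality in $I_m$ forces equality.

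Now set $m' := \lcm(I_m)$, the least common multiple attached to $I_m$ in the definition of $\lcm$-strongly Golod. Since $m'$ is the least common multiple of monomials each dividing $m$, I have $m' \mid m$. To show $I_m$ is $\lcm$-strongly Golod I must verify $(\partial(I_m)^2)_{m'} \subseteq I_m$. As the $m'$-divisible part is generated by the monomials of $\partial(I_m)^2$ that divide $m'$, and any such monomial is a multiple of a product generator $uv/(x_ix_j)$ which then itself divides $m'$, it suffices to treat a single product generator $uv/(x_ix_j)$ of $\partial(I_m)^2$ dividing $m'$, where $u,v \in \MinGen(I_m) \subseteq \MinGen(I)$ by the preceding paragraph.

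The key step is then immediate. Because $I$ is strongly Golod and $u,v$ are minimal generators of $I$ with $x_i \mid u$ and $x_j \mid v$, the monomial characterization yields $uv/(x_ix_j) \in I$. On the other hand $uv/(x_ix_j) \mid m' \mid m$, so $uv/(x_ix_j)$ is a monomial of $I$ dividing $m$, and hence lies in $I_m$ by the very definition of $I_m$. This proves $(\partial(I_m)^2)_{m'} \subseteq I_m$, that is, $I_m$ is $\lcm$-strongly Golod. The final assertion follows by taking $m = \lcm(I)$: then $I$ is $m$-divisible, so $I_m = I$, and the statement specializes to $I$ itself being $\lcm$-strongly Golod.

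I expect no serious obstacle; the content lies entirely in correctly matching the two least common multiples $m$ and $m' = \lcm(I_m)$ and in the identification $\MinGen(I_m) = \{u \in \MinGen(I) : u \mid m\}$. The one point deserving care — the analogue of the bookkeeping in \cite[Proposition 3.7]{HerHun} for the squarefree case — is confirming that passing to the $m'$-divisible part admits no new product generators; this is precisely where $m' \mid m$ is used, since it guarantees that every $uv/(x_ix_j)$ surviving in $(\partial(I_m)^2)_{m'}$ already divides $m$ and is therefore captured by $I_m$.
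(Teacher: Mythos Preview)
Your proof is correct and follows essentially the same route as the paper: both arguments boil down to the chain $\partial(I_m)^{[2]} \subseteq (\partial(I_m)^2)_m \subseteq (\partial(I)^2)_m \subseteq I_m$, using $I_m \subseteq I$, the strongly Golod hypothesis $\partial(I)^2 \subseteq I$, and the divisibility $\lcm(I_m) \mid m$. The paper records this as a one-line containment of ideals, whereas you unpack it at the level of minimal generators; your version is more explicit about the distinction between $m$ and $m' = \lcm(I_m)$, but the underlying argument is identical.
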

\begin{proof}
We have that 
\[
\ds \partial (I_m)^{[2]} = (\partial (I_m)^2)_m \subseteq (\partial(I)^2)_m \subseteq I_m.
\]
\end{proof}

As mentioned in Section \ref{frac}, if $I$ is a strongly Golod monomial ideal, then $\ov I$ is strongly Golod. It is natural to ask the following question:
\begin{question}  If $I \subseteq S$ is an $\lcm$-strongly Golod monomial ideal, is $\ov I$ ($\lcm$-strongly) Golod? For integers $p \gs q$, is the ideal $I^{p/q}$ ($\lcm$-strongly) Golod?
\end{question}
The inequality $p \gs q$ seems reasonable to require, given previous results.
\section{Golodness of products and further questions} \label{quests}
Throughout this section, unless otherwise specified, $k$ is a field of characteristic zero, and $S=k[x_1,\ldots,x_n]$ is a polynomial ring over $k$, with $\deg(x_i) = d_i > 0$. It is easy to see that arbitrary intersections of strongly Golod ideals are strongly Golod \cite[Theorem 2.3 (a)]{HerHun}. Given a proper homogeneous ideal $I \subseteq S$, one may ask what is the intersection of all the strongly Golod ideals containing $I$. In other words, what is the smallest ideal that contains $I$ and that is strongly Golod. Clearly, such an ideal must contain $I+\partial(I)^2$. On the other hand, note that $\partial(\partial(I)^2)) \subseteq \partial(I)$, therefore
\[
\ds \partial(I+\partial(I)^2)^2 \subseteq I+\partial(I)^2.
\]
Thus, $I+\partial(I)^2$ is strongly Golod, and it is indeed the smallest strongly Golod ideal containing $I$. 

We now introduce a sufficient condition, which is far from being necessary, for the product of two ideals to be strongly Golod.
\begin{definition} \label{defn_str_Gol_pairs} Let $S=k[x_1,\ldots,x_n]$ and let $I,J \subseteq S$ be two ideals. $(I,J)$ is called a strongly Golod pair if $\partial(I)^2 \subseteq I:J$ and $\partial(J)^2 \subseteq J:I$.
\end{definition}
Note that, for examples of small size, the conditions from Definition \ref{defn_str_Gol_pairs} can easily be checked with the aid of a computer. The following proposition is the main motivation behind the definition.
\begin{proposition} \label{str_Gol_pairs} If $(I,J)$ is a strongly Golod pair, then $IJ$ is strongly Golod.
\end{proposition}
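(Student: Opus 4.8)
The plan is to verify the strongly Golod condition $\partial(IJ)^2 \subseteq IJ$ directly, by first controlling $\partial(IJ)$ in terms of $\partial(I)$, $\partial(J)$, $I$, and $J$, and then multiplying two such generators together and using the two colon-ideal hypotheses. First I would recall the Leibniz rule: for $f \in I$ and $g \in J$, a partial derivative of the product $fg$ satisfies $\partial_i(fg) = \partial_i(f)\,g + f\,\partial_i(g)$. Since $\partial(IJ)$ is generated by such expressions (together with the observation that every element of $IJ$ is a sum of products $fg$ with $f \in I$, $g \in J$, and that $\partial$ is additive), this yields the containment
\[
\partial(IJ) \subseteq \partial(I)\,J + I\,\partial(J).
\]

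Next I would square this. A product of two generators of $\partial(IJ)$ lies in
\[
\bigl(\partial(I)J + I\partial(J)\bigr)^2 \subseteq \partial(I)^2 J^2 + \partial(I)\,I\,J\,\partial(J) + I^2 \partial(J)^2,
\]
so it suffices to show each of the three summands is contained in $IJ$. For the middle term this is immediate, since $\partial(I)\,I\,J\,\partial(J) \subseteq (\partial(I)\,J)(\,I\,\partial(J)) \subseteq IJ$ already after noting $\partial(I)\subseteq S$ and reorganizing; more simply, $I\,J$ appears as a factor directly. The outer two terms are exactly where the strongly Golod pair hypothesis enters: by Definition \ref{defn_str_Gol_pairs}, $\partial(I)^2 \subseteq I:J$, which by definition of the colon ideal means $\partial(I)^2 J \subseteq I$, hence $\partial(I)^2 J^2 \subseteq I J$. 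Symmetrically, $\partial(J)^2 \subseteq J:I$ gives $I \partial(J)^2 \subseteq J$, so $I^2 \partial(J)^2 \subseteq I J$. Combining the three containments gives $\partial(IJ)^2 \subseteq IJ$, which is the claim.

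The main thing to be careful about is the reduction from arbitrary elements of $IJ$ to products $fg$: a general element of $IJ$ is a finite sum $\sum f_\ell g_\ell$, and $\partial(IJ)$ is generated as an ideal by the partial derivatives of such sums, so I must check that $\partial$ applied to a sum, together with $S$-linearity of the ideal $\partial(IJ)$, does not produce generators outside $\partial(I)J + I\partial(J)$; additivity of $\partial_i$ handles this cleanly, so I expect no genuine obstacle there. The only real content is the bookkeeping in expanding the square and matching each cross-term to the correct colon hypothesis, which is routine once the Leibniz containment is in place.
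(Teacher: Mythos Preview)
Your proposal is correct and follows essentially the same argument as the paper: both use the Leibniz rule to obtain $\partial(IJ) \subseteq \partial(I)J + I\partial(J)$, expand the square into $\partial(I)^2 J^2 + I^2\partial(J)^2 + IJ$, and then apply the two colon-ideal hypotheses to absorb the outer terms into $IJ$. The paper frames the conclusion as $IJ + \partial(IJ)^2 = IJ$, but the computation is identical to yours.
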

\begin{proof} We noted above that the smallest strongly Golod ideal containing $IJ$ is $IJ + \partial(IJ)^2$. In our assumptions, we have
\[
\ds \partial(IJ)^2 \subseteq (\partial(I)J + I \partial(J))^2 \subseteq \partial(I)^2J^2 + I^2 \partial(J)^2 + IJ \subseteq IJ.
\]
Therefore, $IJ + \partial(IJ)^2 = IJ$, which is then strongly Golod.
\end{proof}
Note that, looking at the proof of Proposition \ref{str_Gol_pairs}, one may notice that the conditions $\partial(I)^2 \subseteq IJ:J^2$ and $\partial(J)^2 \subseteq IJ:I^2$ are sufficient in order for the product $IJ$ to be strongly Golod. However, when studying properties of the product $IJ$, one can replace the ideal $I$ with $IJ:J$ without affecting the product. In fact:
\[
\ds IJ \subseteq (IJ:J)J \subseteq IJ,
\]
forcing equality. Repeating the process, one gets an ascending chain of ideals containing $I$, that eventually stabilizes. Therefore one can assume that $IJ:J = I$. Similarly, one can assume that $IJ:I = J$. Therefore the conditions above become
\[
\ds \partial(I)^2 \subseteq IJ:J^2 = (IJ:J):J = I:J,
\]
which is precisely the requirement in the definition of strongly Golod pair. Similarly for the other colon ideal. Of course, as long as one can write an ideal in terms of a Golod pair, one gets that the ideal is strongly Golod. Therefore, one may keep in mind the weaker colon conditions that come from the proof of Proposition \ref{str_Gol_pairs}.
Examples of strongly Golod pairs include:
\begin{enumerate}[(1)]
\item $(I^r,I^s)$, for any proper ideal $I \subseteq S$ and any integers $r,s \gs 1$.
\item If $I$ and $J$ are strongly Golod, then $(I,J)$ is a strongly Golod pair.
\item If $I \subseteq J$ and $I$ is strongly Golod, then $(I,J)$ is a strongly Golod pair.
\item $(I,I:\partial(I)^2)$ is a strongly Golod pair for any proper ideal $I \subseteq S$.
\end{enumerate}

\begin{remark} Let $I_1,\ldots,I_n$ be proper ideals in $S$. Assume that, for all $i = 1,\ldots,n$ there exists $j \ne i$ such that $(I_i,I_j)$ is a strongly Golod pair, then the product $I:=I_1\cdots I_n$ is strongly Golod. In fact
\[
\partial(I_1I_2\ldots I_n) \subseteq \partial(I_1)I_2\ldots I_n + I_1 \partial(I_2)\ldots I_n+ \ldots + I_1I_2\ldots\partial(I_n).
\]
Thus
\[
\partial(I)^2 \subseteq \partial(I_1)^2I_2^2\cdots I_n^2 + I_1^2 \partial(I_2)^2\cdots I_n^2+ \ldots + I_1^2I_2^2\cdots\partial(I_n)^2 + I.
\]
By assumption, for each $i$ there exists $j\ne i$ such that $\partial(I_i)^2I_j \subseteq I_i$, and the claim follows. More generally, one could define $(I_1,\ldots,I_n)$ to be a strongly Golod $n$-uple provided 
\[
\ds \partial(I_i)^2 \subseteq I: (I_1 \cdots I_{i-1}\cdot I_{i+1} \cdots I_n)
\]
for all $i =1,\ldots,n$. Then, the above argument shows that if $(I_1,\ldots,I_n)$ is a strongly Golod $n$-uple, the product $I_1 \cdots I_n$ is strongly Golod.
\end{remark}

All the conditions discussed above are sufficient, but evidently not necessary, for a product of two ideals to be Golod. We raise the following general question:
\begin{question} Is there some relevant class of [pairs of] ideals for which products are [strongly] Golod?
\end{question}
In particular, note that in all the examples of Section \ref{Sec_counter}, the ideals appearing in the product are not Golod. It is then natural to ask:
\begin{question} If one of the two ideals $I_1,I_2$ [or both] is Golod, is then $S/I_1I_2$ Golod?
\end{question}
Another problem relating Golod rings to products is the following. Let $I,J$ be two proper homogeneous ideals in a polynomial rings $S = k[x_1,\ldots,x_n]$, with $\m=(x_1,\ldots,x_n)$. Suppose that $S/IJ$ is Cohen-Macaulay. In \cite{HunProd}, Huneke asks whether the Cohen-Macaulay type, that is, $t(S/IJ) = \dim_k \Ext^{\Depth(S/IJ)}(k,S/IJ)$, is always at least the height of $IJ$. This was motivated by the fact that Gorenstein rings are never products, unless they are hypersurfaces. Thus, when $S/IJ$ is Cohen-Macaulay and not a hypersurface, the type is always at least two. As noted in \cite{HunProd}, the case when $I=\m$ and $J$ is $\m$-primary, follows by Krull's height theorem. In our context, it seems natural to ask the following question:
\begin{question}
Let $I \subseteq S$ be a homogeneous ideal such that $S/I$ is Cohen-Macaulay and Golod. Is it true that the Cohen-Macaulay type $t(S/I)$ is always at least $\Ht(I)$? Is the Cohen-Macaulay assumption needed?
\end{question}
As pointed out by Frank Moore, there is still no example of a ring which is not Golod, and for which the second Massey operation is trivial. With our notation, this means that it is not known whether weakly Golod rings are always Golod. As explained earlier in this article, it has been claimed in \cite[Theorem 5.1]{BerJoll} that this is true in case the ideal is monomial. Given the examples in Section \ref{Sec_counter}, and our previous discussions, we do not know whether such an argument is still valid. We decided to raise the general question here again:
\begin{question} Are weakly Golod rings always Golod? Is it true for monomial ideals? Is there any relevant class of ideals for which this holds true?
\end{question}
In \cite[Proposition 2.12]{HerHun}, Herzog and Huneke show that the Ratliff Rush filtration of a strongly Golod ideal is strongly Golod. We obtain a similar statement, using Golod pairs.
\begin{proposition} \label{Ratliff} If $(I,J)$ is a strongly Golod pair, then the ideal
\[
\ds \bigcup_{n \gs 0} \left(I^{n+1}J : I^n\right)
\]
is strongly Golod.
\end{proposition}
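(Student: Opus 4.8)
The plan is to mimic the Ratliff--Rush technique behind \cite[Proposition 2.12]{HerHun}, but to carry the second ideal $J$ through the entire computation and to exploit the strongly Golod pair hypotheses $\partial(I)^2 \subseteq I:J$ and $\partial(J)^2 \subseteq J:I$ (Definition \ref{defn_str_Gol_pairs}) in the same way as in the proof of Proposition \ref{str_Gol_pairs}. Throughout I work in characteristic zero and write $\partial_i = \partial/\partial x_i$. First I would observe that the union is an ascending chain: setting $W_n := I^{n+1}J : I^n$, if $fI^n \subseteq I^{n+1}J$ then $fI^{n+1}\subseteq I^{n+2}J$, so $W_n \subseteq W_{n+1}$. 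By Noetherianity the chain stabilizes, and I may fix $N$ with $W := \bigcup_{n\gs 0} W_n = I^{N+1}J : I^N$; thus $f \in W$ if and only if $fI^N \subseteq I^{N+1}J$, and any two elements of $W$ satisfy this with the common exponent $N$.

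The heart of the argument is a derivative absorption estimate: for $f \in W$ and any index $i$,
\[
\partial_i(f)\, I^{N+1} \subseteq I^{N+1}\partial(I) J + I^{N+2}\partial(J).
\]
To establish it I would take $h \in I^N$, write $\partial_i(f)h = \partial_i(fh) - f\partial_i(h)$, and use the Leibniz rule together with $fh \in I^{N+1}J$ to get $\partial_i(fh) \in \partial(I^{N+1}J) \subseteq I^N\partial(I)J + I^{N+1}\partial(J)$ and $\partial_i(h) \in I^{N-1}\partial(I)$. This yields $\partial_i(f)I^N \subseteq I^N\partial(I)J + I^{N+1}\partial(J) + fI^{N-1}\partial(I)$; multiplying by $I$ and using $fI^N \subseteq I^{N+1}J$ collapses the last term into the first, giving the displayed containment.

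Finally, for $f,g \in W$ and any indices $i,j$, I would multiply the two absorption estimates to obtain
\[
\partial_i(f)\partial_j(g)\, I^{2N+2} \subseteq \left( I^{N+1}\partial(I)J + I^{N+2}\partial(J) \right)^2,
\]
and bound each of the four resulting summands inside $I^{2N+3}J$. The two pure terms use the pair hypotheses directly: $I^{2N+2}\partial(I)^2 J^2 \subseteq I^{2N+3}J$ because $\partial(I)^2 J \subseteq I$, and $I^{2N+4}\partial(J)^2 \subseteq I^{2N+3}J$ because $\partial(J)^2 I \subseteq J$. The two cross terms both equal $I^{2N+3}\partial(I)\partial(J)J$, which lies in $I^{2N+3}J$ for the trivial reason that it retains an intact factor of the ideal $I^{2N+3}J$. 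Hence $\partial_i(f)\partial_j(g)I^{2N+2} \subseteq I^{2N+3}J$, so $\partial_i(f)\partial_j(g) \in I^{2N+3}J : I^{2N+2} = W$, and therefore $\partial(W)^2 \subseteq W$, as desired.

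I expect the main obstacle to be precisely the mixed derivative expression $\partial(I)\partial(J)$ that appears in the cross terms, since neither pair hypothesis controls it in isolation (each is quadratic in a single $\partial$). The resolution, and the reason the pair hypotheses suffice, is the observation already used in Proposition \ref{str_Gol_pairs}: after multiplying the two absorption estimates, these cross terms always keep a full factor of a large power of $I$ times $J$, so they are absorbed for free rather than through the strongly Golod inequalities. Specializing $J = S$ recovers \cite[Proposition 2.12]{HerHun}, which is a useful consistency check on the bookkeeping.
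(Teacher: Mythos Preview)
Your proof is correct and follows essentially the same route as the paper's. The only cosmetic difference is bookkeeping: the paper uses both containments $fI^{n-1}\subseteq I^{n}J$ and $fI^{n}\subseteq I^{n+1}J$ simultaneously to obtain $\partial(f)I^{n}\subseteq I^{n}\partial(I)J+I^{n+1}\partial(J)$ directly, whereas you use only $fI^{N}\subseteq I^{N+1}J$ and then multiply through by an extra copy of $I$ to absorb the $fI^{N-1}\partial(I)$ term---this shifts every exponent up by one but changes nothing of substance.
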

\begin{proof}
Let $f \in S$ be such that $f I^{n-1} \subseteq I^{n}J$ for some $n$. Then $fI^n \subseteq I^{n+1}J$. Let $\partial$ denote a partial derivative with respect to any variable. Taking partial derivatives, from the containment above we obtain that
\[
\partial(f)I^n \subseteq fI^{n-1} \partial(I) + I^n \partial(I)J + I^{n+1} \partial(J) \subseteq I^n \partial(I)J + I^{n+1}\partial(J).
\]
Let $f,g \in \bigcup_{n \gs 0} I^{n+1}J:I^n$ and choose $n \gg 0$ such that $fI^{n-1} \subseteq I^{n}J$ and $gI^{n-1} \subseteq I^{n}J$. Then
\[
\partial(f)\partial(g)I^{2n} \subseteq I^{2n} \partial(I)^2 J^2 + I^{2n+2} \partial(J)^2 + I^{2n+1}J \subseteq I^{2n+1}J
\]
because $\partial(I)^2 J \subseteq I$ and $I \partial(J)^2 \subseteq J$.
\end{proof}
In particular, Proposition \ref{Ratliff} shows that the Ratliff-Rush closure of any power $I^d$, $d \gs 2$, is strongly Golod. In fact, it is enough to apply Proposition \ref{Ratliff} to the strongly Golod pair $(I^{d-1},I)$. This already follows from \cite[Proposition 2.12]{HerHun}, since $I^d$ is strongly Golod for any $d \gs 2$.

Given that the Ratliff-Rush closure of a strongly Golod ideal is strongly Golod we ask:
\begin{question} \label{coeff_id} Given a strongly Golod ideal $I \subseteq S$, is every coefficient ideal of $I$ [strongly] Golod?
\end{question}
Question \ref{coeff_id} is a more general version of Question \ref{quest_int}. In fact, both the integral closure and the Ratliff-Rush closure are coefficient ideals. See \cite{Shah} for details about coefficient ideals.

We conclude the section with two questions regarding the notion of strongly Golod ideal. The definition of strongly Golod ideals is restricted to homogeneous ideals in a polynomial ring $S=k[x_1,\ldots,x_n]$, with $k$ a field of characteristic zero. This is because Herzog's canonical lift of Koszul cycles \cite{HerCycles} can be applied only under these assumptions.
\begin{question} \label{local} Is there a suitable definition of strongly Golod for local rings, at least when the ring contains a field?
\end{question}
\begin{question} \label{char_p}
Is there a notion of strongly Golod that does not require the characteristic of $k$ to be zero?
\end{question}
\section*{Acknowledgements}
We would like to thank Craig Huneke for stimulating discussions about this project, and for constant encouragement and support. We also thank David Eisenbud for significant help with some Macaulay2 computations and for valuable suggestions. We are indebted to Srikanth Iyeangar, for fruitful conversations that led to Example \ref{Sri}. These took place at the AMS-MRC workshop in Commutative Algebra, Snowbird 2015. Therefore, we would like to thank the American Mathematical Society for providing such a great opportunity to this author. We thank Adam Boocher, Aldo Conca, Lukas Katth{\"a}n and Volkmar Welker for very helpful comments and suggestions. We also thank the anonymous referee, for helpful suggestions that improved the exposition of this article.
\begin{appendices} 
\section{The minimal free resolution of Example \ref{main}} \label{res} Let $k$ be a field, and let $S=k[a,b,c,d,x,y,z,w]$. Consider the monomial ideals $I_1 = (ax,by,cz,dw)$ and $I_2 = (a,b,c,d)$ inside $S$. Let $I:=I_1 I_2$ be their product, and set $R=S/I$. Let $T=\ZZ[a,b,c,d,x,y,z,w]$, and let $J$ be the ideal $I$ inside $T$. Then, using the Macaulay2 command \verb|res J|, we get a resolution of $J$ over $T$
\[
\xymatrixcolsep{5mm}
\xymatrixrowsep{2mm}
\xymatrix{
F_\bullet: 0 \ar[rr] && T^5 \ar[rr]^-{\varphi_4} && T^{20} \ar[rr]^-{\varphi_3} && T^{30} \ar[rr]^-{\varphi_2} && T^{16} \ar[rr]^-{\varphi_1} && T \ar[rr]^-{\varphi_0} && T/I \ar[rr] && 0.
}
\]
Assume that $\Char(k)  = p >0$. We checked with Macaulay2 \cite{Mac2} that $(a^2x) \subseteq I(\varphi_1)$, where $I(\varphi_1)$ is the Fitting ideal of the map $\varphi_1$. This is still a regular element after tensoring with $- \otimes_\ZZ \ZZ/(p)$, so that $\grade(I(\varphi_1 \otimes 1_{\ZZ/(p)})) \gs 1$. Similarly, one can see that 
\[
\ds  \hspace{1cm} (a^{12}x^3,b^{12}y^3) \subseteq I(\varphi_2) \hspace{1cm} (a^{15}x^3,b^{15}y^3,c^{15}z^3) \subseteq I(\varphi_3) \hspace{1cm} (a^5x,b^5y,c^5z,d^5w) \subseteq I(\varphi_4)
\]
Since the former stay regular sequences after tensoring with $-\otimes_\ZZ \ZZ/(p)$ we obtain that 
\[
\ds \grade(I(\varphi_i \otimes 1_{\ZZ/(p)})) \gs i
\]
for all $i =1,\ldots,4$. In addition, the ranks of the maps add up to the correct numbers after tensoring. By Buchsbaum-Eisenbud's criterion for exactness of complexes \cite{BEComplexExact}, $F_\bullet \otimes_\ZZ \ZZ/(p)$ is a minimal free resolution of $J \otimes_\ZZ \ZZ/(p)$ as an ideal of $T \otimes_Z \ZZ/(p) \cong \ZZ/(p)[a,b,c,d,x,y,z,w]$. Finally, since the map $T \otimes_\ZZ \ZZ/(p) \to S$ is faithfully flat, tensoring with $(F_\bullet \otimes_\ZZ \ZZ/(p)) \otimes_{\ZZ/(p)} S$ gives  a minimal free resolution of $I$ over $S$, using Buchsbaum-Eisenbud's criterion for exactness of complexes \cite{BEComplexExact} once again. When $\Char(k) = 0$, one can use $\QQ$ instead of $\ZZ/(p)$ and the same arguments can be applied.

Therefore we get a resolution
\[
\xymatrixcolsep{5mm}
\xymatrixrowsep{2mm}
\xymatrix{
F_\bullet \otimes_\ZZ S: 0 \ar[rr] && S^5 \ar[rr]^-{\varphi_4} && S^{20} \ar[rr]^-{\varphi_3} && S^{30} \ar[rr]^-{\varphi_2} && S^{16} \ar[rr]^-{\varphi_1} && S \ar[rr]^-{\varphi_0} && R \ar[rr] && 0.
}
\]
Letting $E^{(i)}_j$ be the canonical bases of the modules $F_i \cong \bigoplus_{j=1}^{\beta_i} T$, for $i=0,\ldots,4$, the matrices representing the differentials of the minimal free resolution of $J$ over $T$ are the same as the ones of a minimal free resolution of $I$ over $S$. Here follows a description of such matrices. All the missing entries should be regarded as zeros:
\vspace{1cm} 
\setlength{\tabcolsep}{2pt}
\[
\begin{tabular}{cc|c|c|c|c|c|c|c|c|c|c|c|c|c|c|c|c|} $\varphi_1$ && $E^{(1)}_1$ & $E^{(1)}_2$ & $E^{(1)}_3$ & $E^{(1)}_4$ & $E^{(1)}_5$ & $E^{(1)}_6$ & $E^{(1)}_7$ & $E^{(1)}_8$ & $E^{(1)}_9$ & $E^{(1)}_{10}$ & $E^{(1)}_{11}$ & $E^{(1)}_{12}$ & $E^{(1)}_{13}$ & $E^{(1)}_{14}$ & $E^{(1)}_{15}$ & $E^{(1)}_{16}$ \\  \hline &&&&&&&&&&&&&&&&& \\
$E^{(0)}_1$ & & $a^2x$ & $abx$ & $acx$ & $adx$ & $aby$ & $b^2y$ & $bcy$ & $bdy$ & $acz$ & $bcz$ & $c^2z$ & $cdz$ & $adw$ & $bdw$ & $cdw$ & $d^2w$ \\ \hline \end{tabular}
\]
\newpage

\begin{sideways}
\vspace{15cm}
\centering\small\setlength\tabcolsep{1pt}
\begin{tabular}{cc|c|c|c|c|c|c|c|c|c|c|c|c|c|c|c|c|c|c|c|c|c|c|c|c|c|c|c|c|c|c|ccccccccccc} $\varphi_2$ && $E^{(2)}_1$ & $E^{(2)}_2$ & $E^{(2)}_3$ & $E^{(2)}_4$ & $E^{(2)}_5$ & $E^{(2)}_6$ & $E^{(2)}_7$ & $E^{(2)}_8$ & $E^{(2)}_9$ & $E^{(2)}_{10}$ & $E^{(2)}_{11}$ & $E^{(2)}_{12}$ & $E^{(2)}_{13}$ & $E^{(2)}_{14}$ & $E^{(2)}_{15}$ & $E^{(2)}_{16}$ & $E^{(2)}_{17}$& $E^{(2)}_{18}$& $E^{(2)}_{19}$& $E^{(2)}_{20}$ & $E^{(2)}_{21}$ & $E^{(2)}_{22}$ & $E^{(2)}_{23}$ & $E^{(2)}_{24}$ & $E^{(2)}_{25}$ & $E^{(2)}_{26}$ & $E^{(2)}_{27}$& $E^{(2)}_{28}$& $E^{(2)}_{29}$& $E^{(2)}_{30}$ &&&&&&&&&&& \\  \cline{1-32} &&&&&&&&&&&&&&&&&&&&&&&&&&&&&&& \\
$E^{(1)}_1$ & & $-b$ & $-c$ & $$ & $-d$ & $$ & $$ & $$ & $$ & $$ & $$ & $$ & $$ & $$ & $$ & $$ & $$ & $$ & $$ & $$ & $$ & $$ & $$ & $$ & $$ & $$ & $$ & $$ & $$ & $$ & $$\\
\cline{1-32} &&&&&&&&&&&&&&&&&&&&&&&&&&&&&&& \\
$E^{(1)}_2$ & & $a$ & $$ & $-c$ & $$ & $-d$ & $$ & $$ & $$ & $$ & $$ & $$ & $$ & $-y$ & $$ & $$ & $$ & $$ & $$ & $$ & $$ & $$ & $$ & $$ & $$ & $$ & $$ & $$ & $$ & $$ & $$ \\
\cline{1-32} &&&&&&&&&&&&&&&&&&&&&&&&&&&&&&& \\
$E^{(1)}_3$ & & $$ & $a$ & $b$ & $$ & $$ & $-d$ & $$ & $$ & $$ & $$ & $$ & $$ & $$ & $$ & $$ & $$ & $$ & $$ & $$ & $-z$ & $$ & $$ & $$ & $$ & $$ & $$ & $$ & $$ & $$ & $$ \\
\cline{1-32} &&&&&&&&&&&&&&&&&&&&&&&&&&&&&&& \\
$E^{(1)}_4$ & & $$ & $$ & $$ & $a$ & $b$ & $c$ & $$ & $$ & $$ & $$ & $$ & $$ & $$ & $$ & $$ & $$ & $$ & $$ & $$ & $$ & $$ & $$ & $$ & $$ & $$ & $$ & $$ & $-w$ & $$ & $$ \\
\cline{1-32} &&&&&&&&&&&&&&&&&&&&&&&&&&&&&&&\\
$E^{(1)}_5$ & & $$ & $$ & $$ & $$ & $$ & $$ & $-b$ & $-c$ & $$ & $-d$ & $$ & $$ & $x$ & $$ & $$ & $$ & $$ & $$ & $$ & $$ & $$ & $$ & $$ & $$ & $$ & $$ & $$ & $$ & $$ & $$ \\
\cline{1-32} &&&&&&&&&&&&&&&&&&&&&&&&&&&&&&& \\
$E^{(1)}_6$ & & $$ & $$ & $$ & $$ & $$ & $$ & $a$ & $$ & $-c$ & $$ & $-d$ & $$ & $$ & $$ & $$ & $$ & $$ & $$ & $$ & $$ & $$ & $$ & $$ & $$ & $$ & $$ & $$ & $$ & $$ & $$ \\
\cline{1-32} &&&&&&&&&&&&&&&&&&&&&&&&&&&&&&& \\
$E^{(1)}_7$ & & $$ & $$ & $$ & $$ & $$ & $$ & $$ & $a$ & $b$ & $$ & $$ & $-d$ & $$ & $$ & $$ & $$ & $$ & $$ & $$ & $$ & $-z$ & $$ & $$ & $$ & $$ & $$ & $$ & $$ & $$ & $$ \\
\cline{1-32} &&&&&&&&&&&&&&&&&&&&&&&&&&&&&&& \\
$E^{(1)}_8$ & & $$ & $$ & $$ & $$ & $$ & $$ & $$ & $$ & $$ & $a$ & $b$ & $c$ & $$ & $$ & $$ & $$ & $$ & $$ & $$ & $$ & $$ & $$ & $$ & $$ & $$ & $$ & $$ & $$ & $-w$ & $$ \\
\cline{1-32} &&&&&&&&&&&&&&&&&&&&&&&&&&&&&&& \\
$E^{(1)}_9$ & & $$ & $$ & $$ & $$ & $$ & $$ & $$ & $$ & $$ & $$ & $$ & $$ & $$ & $-b$ & $-c$ & $$ & $-d$ & $$ & $$ & $x$ & $$ & $$ & $$ & $$ & $$ & $$ & $$ & $$ & $$ & $$ \\
\cline{1-32} &&&&&&&&&&&&&&&&&&&&&&&&&&&&&&& \\
$E^{(1)}_{10}$ & & $$ & $$ & $$ & $$ & $$ & $$ & $$ & $$ & $$ & $$ & $$ & $$ & $$ & $a$ & $$ & $-c$ & $$ & $-d$ & $$ & $$ & $y$ & $$ & $$ & $$ & $$ & $$ & $$ & $$ & $$ & $$ \\
\cline{1-32} &&&&&&&&&&&&&&&&&&&&&&&&&&&&&&& \\
$E^{(1)}_{11}$ & & $$ & $$ & $$ & $$ & $$ & $$ & $$ & $$ & $$ & $$ & $$ & $$ & $$ & $$ & $a$ & $b$ & $$ & $$ & $-d$ & $$ & $$ & $$ & $$ & $$ & $$ & $$ & $$ & $$ & $$ & $$ \\
\cline{1-32} &&&&&&&&&&&&&&&&&&&&&&&&&&&&&&& \\
$E^{(1)}_{12}$ & & $$ & $$ & $$ & $$ & $$ & $$ & $$ & $$ & $$ & $$ & $$ & $$ & $$ & $$ & $$ & $$ & $a$ & $b$ & $c$ & $$ & $$ & $$ & $$ & $$ & $$ & $$ & $$ & $$ & $$ & $-w$ \\
\cline{1-32} &&&&&&&&&&&&&&&&&&&&&&&&&&&&&&& \\
$E^{(1)}_{13}$ & & $$ & $$ & $$ & $$ & $$ & $$ & $$ & $$ & $$ & $$ & $$ & $$ & $$ & $$ & $$ & $$ & $$ & $$ & $$ & $$ & $$ & $-b$ & $-c$ & $$ & $-d$ & $$ & $$ & $x$ & $$ & $$ \\
\cline{1-32} &&&&&&&&&&&&&&&&&&&&&&&&&&&&&&& \\
$E^{(1)}_{14}$ & & $$ & $$ & $$ & $$ & $$ & $$ & $$ & $$ & $$ & $$ & $$ & $$ & $$ & $$ & $$ & $$ & $$ & $$ & $$ & $$ & $$ & $a$ & $$ & $-c$ & $$ & $-d$ & $$ & $$ & $y$ & $$ \\
\cline{1-32} &&&&&&&&&&&&&&&&&&&&&&&&&&&&&&& \\
$E^{(1)}_{15}$ & & $$ & $$ & $$ & $$ & $$ & $$ & $$ & $$ & $$ & $$ & $$ & $$ & $$ & $$ & $$ & $$ & $$ & $$ & $$ & $$ & $$ & $$ & $a$ & $b$ & $$ & $$ & $-d$ & $$ & $$ & $z$ \\
\cline{1-32} &&&&&&&&&&&&&&&&&&&&&&&&&&&&&&& \\
$E^{(1)}_{16}$ & & $$ & $$ & $$ & $$ & $$ & $$ & $$ & $$ & $$ & $$ & $$ & $$ & $$ & $$ & $$ & $$ & $$ & $$ & $$ & $$ & $$ & $$ & $$ & $$ & $a$ & $b$ & $c$ & $$ & $$ & $$ \\
\cline{1-32} \end{tabular} \hspace*{-1cm}
\end{sideways}
\newpage
\ \ \ 
\vspace{1cm}
\begin{center}
\setlength\tabcolsep{1pt}
\begin{tabular}{cc|c|c|c|c|c|c|c|c|c|c|c|c|c|c|c|c|c|c|c|c|} $\varphi_3$ && $E^{(3)}_1$ & $E^{(3)}_2$ & $E^{(3)}_3$ & $E^{(3)}_4$ & $E^{(3)}_5$ & $E^{(3)}_6$ & $E^{(3)}_7$ & $E^{(3)}_8$ & $E^{(3)}_9$ & $E^{(3)}_{10}$ & $E^{(3)}_{11}$ & $E^{(3)}_{12}$ & $E^{(3)}_{13}$ & $E^{(3)}_{14}$ & $E^{(3)}_{15}$ & $E^{(3)}_{16}$ & $E^{(3)}_{17}$& $E^{(3)}_{18}$& $E^{(3)}_{19}$& $E^{(3)}_{20}$ \\   \hline
$E^{(2)}_1$ && $c$ & $d$ & $$ & $$ & $$ & $$ & $$ & $$ & $$ & $$ & $$ & $$ & $$ & $$ & $$ & $$ & $$ & $$ & $$ & $$ \\  \hline
$E^{(2)}_2$ && $-b$ & $$ & $d$ & $$ & $$ & $$ & $$ & $$ & $$ & $$ & $$ & $$ & $$ & $$ & $$ & $$ & $$ & $$ & $$ & $$ \\  \hline
$E^{(2)}_3$ && $a$ & $$ & $$ & $d$ & $$ & $$ & $$ & $$ & $$ & $$ & $$ & $$ & $$ & $$ & $$ & $$ & $-yz$ & $$ & $$ & $$ \\  \hline
$E^{(2)}_4$ && $$ & $-b$ & $-c$ & $$ & $$ & $$ & $$ & $$ & $$ & $$ & $$ & $$ & $$ & $$ & $$ & $$ & $$ & $$ & $$ & $$ \\  \hline
$E^{(2)}_5$ && $$ & $a$ & $$ & $-c$ & $$ & $$ & $$ & $$ & $$ & $$ & $$ & $$ & $$ & $$ & $$ & $$ & $$ & $-yw$ & $$ & $$ \\  \hline
$E^{(2)}_6$ && $$ & $$ & $a$ & $b$ & $$ & $$ & $$ & $$ & $$ & $$ & $$ & $$ & $$ & $$ & $$ & $$ & $$ & $$ & $-zw$ & $$ \\  \hline
$E^{(2)}_7$ && $$ & $$ & $$ & $$ & $c$ & $d$ & $$ & $$ & $$ & $$ & $$ & $$ & $$ & $$ & $$ & $$ & $$ & $$ & $$ & $$ \\  \hline
$E^{(2)}_8$ && $$ & $$ & $$ & $$ & $-b$ & $$ & $d$ & $$ & $$ & $$ & $$ & $$ & $$ & $$ & $$ & $$ & $xz$ & $$ & $$ & $$ \\  \hline
$E^{(2)}_9$ && $$ & $$ & $$ & $$ & $a$ & $$ & $$ & $d$ & $$ & $$ & $$ & $$ & $$ & $$ & $$ & $$ & $$ & $$ & $$ & $$ \\  \hline
$E^{(2)}_{10}$ && $$ & $$ & $$ & $$ & $$ & $-b$ & $-c$ & $$ & $$ & $$ & $$ & $$ & $$ & $$ & $$ & $$ & $$ & $xw$ & $$ & $$ \\  \hline
$E^{(2)}_{11}$ && $$ & $$ & $$ & $$ & $$ & $a$ & $$ & $-c$ & $$ & $$ & $$ & $$ & $$ & $$ & $$ & $$ & $$ & $$ & $$ & $$ \\  \hline
$E^{(2)}_{12}$ && $$ & $$ & $$ & $$ & $$ & $$ & $a$ & $b$ & $$ & $$ & $$ & $$ & $$ & $$ & $$ & $$ & $$ & $$ & $$ & $-zw$ \\  \hline
$E^{(2)}_{13}$ && $$ & $$ & $$ & $$ & $$ & $$ & $$ & $$ & $$ & $$ & $$ & $$ & $$ & $$ & $$ & $$ & $cz$ & $dw$ & $$ & $$ \\  \hline
$E^{(2)}_{14}$ && $$ & $$ & $$ & $$ & $$ & $$ & $$ & $$ & $c$ & $d$ & $$ & $$ & $$ & $$ & $$ & $$ & $-xy$ & $$ & $$ & $$ \\  \hline
$E^{(2)}_{15}$ && $$ & $$ & $$ & $$ & $$ & $$ & $$ & $$ & $-b$ & $$ & $d$ & $$ & $$ & $$ & $$ & $$ & $$ & $$ & $$ & $$ \\  \hline
$E^{(2)}_{16}$ && $$ & $$ & $$ & $$ & $$ & $$ & $$ & $$ & $a$ & $$ & $$ & $d$ & $$ & $$ & $$ & $$ & $$ & $$ & $$ & $$ \\  \hline
$E^{(2)}_{17}$ && $$ & $$ & $$ & $$ & $$ & $$ & $$ & $$ & $$ & $-b$ & $-c$ & $$ & $$ & $$ & $$ & $$ & $$ & $$ & $xw$ & $$ \\  \hline
$E^{(2)}_{18}$ && $$ & $$ & $$ & $$ & $$ & $$ & $$ & $$ & $$ & $a$ & $$ & $-c$ & $$ & $$ & $$ & $$ & $$ & $$ & $$ & $yw$ \\  \hline
$E^{(2)}_{19}$ && $$ & $$ & $$ & $$ & $$ & $$ & $$ & $$ & $$ & $$ & $a$ & $b$ & $$ & $$ & $$ & $$ & $$ & $$ & $$ & $$ \\  \hline
$E^{(2)}_{20}$ && $$ & $$ & $$ & $$ & $$ & $$ & $$ & $$ & $$ & $$ & $$ & $$ & $$ & $$ & $$ & $$ & $-by$ & $$ & $dw$ & $$ \\  \hline
$E^{(2)}_{21}$ && $$ & $$ & $$ & $$ & $$ & $$ & $$ & $$ & $$ & $$ & $$ & $$ & $$ & $$ & $$ & $$ & $ax$ & $$ & $$ & $dw$ \\  \hline
$E^{(2)}_{22}$ && $$ & $$ & $$ & $$ & $$ & $$ & $$ & $$ & $$ & $$ & $$ & $$ & $c$ & $d$ & $$ & $$ & $$ & $-xy$ & $$ & $$ \\  \hline
$E^{(2)}_{23}$ && $$ & $$ & $$ & $$ & $$ & $$ & $$ & $$ & $$ & $$ & $$ & $$ & $-b$ & $$ & $d$ & $$ & $$ & $$ & $-xz$ & $$ \\  \hline
$E^{(2)}_{24}$ && $$ & $$ & $$ & $$ & $$ & $$ & $$ & $$ & $$ & $$ & $$ & $$ & $a$ & $$ & $$ & $d$ & $$ & $$ & $$ & $-yz$ \\  \hline
$E^{(2)}_{25}$ && $$ & $$ & $$ & $$ & $$ & $$ & $$ & $$ & $$ & $$ & $$ & $$ & $$ & $-b$ & $-c$ & $$ & $$ & $$ & $$ & $$ \\  \hline
$E^{(2)}_{26}$ && $$ & $$ & $$ & $$ & $$ & $$ & $$ & $$ & $$ & $$ & $$ & $$ & $$ & $a$ & $$ & $-c$ & $$ & $$ & $$ & $$ \\  \hline
$E^{(2)}_{27}$ && $$ & $$ & $$ & $$ & $$ & $$ & $$ & $$ & $$ & $$ & $$ & $$ & $$ & $$ & $a$ & $b$ & $$ & $$ & $$ & $$ \\  \hline
$E^{(2)}_{28}$ && $$ & $$ & $$ & $$ & $$ & $$ & $$ & $$ & $$ & $$ & $$ & $$ & $$ & $$ & $$ & $$ & $$ & $-by$ & $-cz$ & $$ \\  \hline
$E^{(2)}_{29}$ && $$ & $$ & $$ & $$ & $$ & $$ & $$ & $$ & $$ & $$ & $$ & $$ & $$ & $$ & $$ & $$ & $$ & $ax$ & $$ & $-cz$ \\  \hline
$E^{(2)}_{30}$ && $$ & $$ & $$ & $$ & $$ & $$ & $$ & $$ & $$ & $$ & $$ & $$ & $$ & $$ & $$ & $$ & $$ & $$ & $ax$ & $by$ \\  \hline

\end{tabular}
\end{center}
\newpage
\ \ 
\vspace{0.6cm}
\begin{center}
\setlength{\tabcolsep}{6pt}
\begin{tabular}{cc|c|c|c|c|c|} $\varphi_4$ && $E^{(4)}_1$ & $E^{(4)}_2$ & $E^{(4)}_3$ & $E^{(4)}_4$ & $E^{(4)}_5$\\  \hline &&&&&& \\
$E^{(3)}_1$ & & $-d$ & $$ & $$ & $$ & $$ \\ \hline
&&&&&& \\
$E^{(3)}_2$ & & $c$ & $$ & $$ & $$ & $$ \\ \hline
&&&&&& \\
$E^{(3)}_3$ & & $-b$ & $$ & $$ & $$ & $$ \\ \hline
&&&&&& \\
$E^{(3)}_4$ & & $a$ & $$ & $$ & $$ & $-yzw$ \\ \hline
&&&&&& \\
$E^{(3)}_5$ & & $$ & $-d$ & $$ & $$ & $$ \\ \hline
&&&&&& \\
$E^{(3)}_6$ & & $$ & $c$ & $$ & $$ & $$ \\ \hline
&&&&&& \\
$E^{(3)}_7$ & & $$ & $-b$ & $$ & $$ & $xzw$ \\ \hline
&&&&&& \\
$E^{(3)}_8$ & & $$ & $a$ & $$ & $$ & $$ \\ \hline
&&&&&& \\
$E^{(3)}_9$ & & $$ & $$ & $-d$ & $$ & $$ \\ \hline
&&&&&& \\
$E^{(3)}_{10}$ & & $$ & $$ & $c$ & $$ & $-xyw$ \\ \hline
&&&&&& \\
$E^{(3)}_{11}$ & & $$ & $$ & $-b$ & $$ & $$ \\ \hline
&&&&&& \\
$E^{(3)}_{12}$ & & $$ & $$ & $a$ & $$ & $$ \\ \hline
&&&&&& \\
$E^{(3)}_{13}$ & & $$ & $$ & $$ & $-d$ & $xyz$ \\ \hline
&&&&&& \\
$E^{(3)}_{14}$ & & $$ & $$ & $$ & $c$ & $$ \\ \hline
&&&&&& \\
$E^{(3)}_{15}$ & & $$ & $$ & $$ & $-b$ & $$ \\ \hline
&&&&&& \\
$E^{(3)}_{16}$ & & $$ & $$ & $$ & $a$ & $$ \\ \hline
&&&&&& \\
$E^{(3)}_{17}$ & & $$ & $$ & $$ & $$ & $-dw$ \\ \hline
&&&&&& \\
$E^{(3)}_{18}$ & & $$ & $$ & $$ & $$ & $cz$ \\ \hline
&&&&&& \\
$E^{(3)}_{19}$ & & $$ & $$ & $$ & $$ & $-by$ \\ \hline
&&&&&& \\
$E^{(3)}_{20}$ & & $$ & $$ & $$ & $$ & $ax$ \\ \hline \end{tabular}
\end{center}
\end{appendices}
\newpage

\bibliographystyle{alpha}
\bibliography{References}
{\footnotesize

\vspace{0.5in}

\noindent \small \textsc{Department of Mathematics, University of Virginia, Charlottesville, VA  22903} \\ \indent \emph{Email address}:  {\tt ad9fa@virginia.edu} 
}
\end{document}